\documentclass{birkjour}
\usepackage[utf8]{inputenc}
\usepackage{amssymb,amsmath,amsfonts,eucal,mathrsfs,amsthm} 
\usepackage[allcolors=blue]{hyperref}
\usepackage{empheq}
\usepackage{bm}

 \newtheorem{theorem}{Theorem}[section]
 \newtheorem{corollary}[theorem]{Corollary}
 \newtheorem{lemma}[theorem]{Lemma}
 \newtheorem{proposition}[theorem]{Proposition}
 \theoremstyle{definition}
 
 \theoremstyle{remark}
 \newtheorem{rem}[theorem]{Remark}
 \newtheorem{example}{Example}
 \numberwithin{equation}{section}

\begin{document}

%
%
%
%
%
%
%
%
%

\title[On a class of hypersurfaces of a product of two space forms]
 {On a class of hypersurfaces of a product of two space forms}

\thanks{Corresponding author: arnandonelio@usp.br. The first author was supported by  CAPES-PROEX grant 12498478/D. The second author was partially supported by  Fapesp grant 2022/16097-2 and CNPq grant 307016/2021-8.}

\author[Arnando Carvalho and Ruy Tojeiro]{A. N. S. Carvalho 
\and R. Tojeiro}

\address{%
Institute of Mathematics and Computer Sciences (ICMC) \\
University of São Paulo (USP)\\
SP 13566590  São Carlos\\
Brazil}

\email{arnandonelio@usp.br and tojeiro@icmc.usp.br}

\subjclass{53 B25}

\keywords{Hypersurfaces in class $\mathcal{A}$, hypersurfaces with constant mean curvature, hypersurfaces with constant product angle function, products of space forms. }

\date{\today}

\begin{abstract}
We define hypersurfaces $f\colon M^n\to \mathbb{Q}_{c_1}^{k} \times \mathbb{Q}_{c_2}^{n-k+1}$ in class $\mathcal{A}$ of a product of two space forms as those that have flat normal bundle when regarded as submanifolds of the underlying flat ambient space. We provide an explicit construction of all of them in terms of parallel families of hypersurfaces of the factors, and show how such construction simplifies for the hypersurfaces within this class that have constant product angle function.  We also show that hypersurfaces with constant mean curvature in class $\mathcal{A}$  are given in terms of parallel families of isoparametric hypersurfaces in each factor and a solution of a second order ODE. Finally, we classify hypersurfaces with constant mean curvature in class~$\mathcal{A}$ that have constant product angle function. 
\end{abstract}

\maketitle
\section{Introduction}
Some relevant classes of hypersurfaces of the product spaces $\mathbb{S}^n\times \mathbb{R}$ and $\mathbb{H}^n\times \mathbb{R}$, where $\mathbb{S}^n$ and $\mathbb{H}^n$ denote the $n$-dimensional sphere and hyperbolic space, respectively, such as hypersurfaces with constant sectional curvature, umbilical hypersurfaces, and \emph{hypersurfaces with constant angle}, that is, hypersurfaces whose normal vector field make a constant angle with the unit vector field $\partial /\partial t$ that spans the factor $\mathbb{R}$, share the property that the tangent component of $\partial /\partial t$ is a principal direction. These
hypersurfaces were studied in \cite{To} and were named \emph{hypersurfaces in class} $\mathcal{A}$ in \cite{MT2}, where this notion was extended for submanifolds of higher codimension. It was shown in \cite{To} that 
a hypersurface of $\mathbb{S}^n\times \mathbb{R}$ (respectively,  $\mathbb{H}^n\times \mathbb{R}$) in class $\mathcal{A}$ is 
 characterized by the fact that it has flat normal bundle when regarded as a submanifold with codimension two of Euclidean space $\mathbb{R}^{n+2}$ (respectively, Lorentzian space $\mathbb{L}^{n+2}$). 

 Here we use the preceding characterization of  hypersurfaces of $\mathbb{S}^n\times \mathbb{R}$ and $\mathbb{H}^n\times \mathbb{R}$ in class $\mathcal{A}$ to extend this notion to the case of hypersurfaces $f\colon M^n\to \mathbb{Q}_{c_1}^{k} \times \mathbb{Q}_{c_2}^{n-k+1}$, $2\leq k\leq n-1$, into products of space forms with arbitrary constant curvatures $c_1$ and $c_2$. So, we say that a hypersurface $f\colon M^n\to \mathbb{Q}_{c_1}^{k} \times \mathbb{Q}_{c_2}^{n-k+1}$ is in class $\mathcal{A}$ if it has flat normal bundle when regarded as a submanifold of the underlying flat ambient space. 
 
 Our interest in studying hypersurfaces of $\mathbb{Q}_{c_1}^{k} \times \mathbb{Q}_{c_2}^{n-k+1}$, $2\leq k\leq n-1$, in class $\mathcal{A}$ is twofold. First, as in the case in which the ambient space is $\mathbb{S}^n\times \mathbb{R}$ and $\mathbb{H}^n\times \mathbb{R}$, class $\mathcal{A}$ includes some relevant classes of hypersurfaces, such as hypersurfaces with constant sectional curvature of dimension $n\geq 4$ (see \cite{KNT}), umbilical hypersurfaces, and doubly rotational hypersurfaces (see Example \ref{doubly}). Second,  hypersurfaces in class $\mathcal{A}$ provide interesting examples of hypersurfaces in other important classes, such as those with constant mean curvature and those with constant product angle function, which are considered in Sections $4$ and $5$. In the former case, an interesting connection with isoparametric hypersurfaces of space forms arises.
 
Our main result is an explicit description of all hypersurfaces of $ \mathbb{Q}_{c_1}^{k} \times \mathbb{Q}_{c_2}^{n-k+1}$, $2\leq k\leq n-1$, that are in class $\mathcal{A}$ and do not split in any open subset, in terms of parallel families of hypersurfaces of the factors (see Theorem~\ref{classAcylinders}). 
Then we show how such a description simplifies for the hypersurfaces within this class that have a constant product angle function. The
\emph{product angle function} of a hypersurface $f\colon M^n\to \mathbb{Q}_{c_1}^{k} \times \mathbb{Q}_{c_2}^{n-k+1}$ is given by $\Theta=\langle PN, N\rangle$, where $N$ is a unit normal vector field along $f$ and $P$ is the product structure of $\mathbb{Q}_{c_1}^{k} \times \mathbb{Q}_{c_2}^{n-k+1}$, defined by $P(X_1, X_2)=(X_1, -X_2)$
for tangent vector fields $X_1$ and $X_2$. It has
proven to contain important information about the geometry of the hypersurface.

   We also show that hypersurfaces of $\mathbb{Q}_{c_1}^{k} \times \mathbb{Q}_{c_2}^{n-k+1}$, $2\leq k\leq n-1$, in class~$\mathcal{A}$ that have constant mean curvature and do not split in any open subset are given, by means of the general description of hypersurfaces in class $\mathcal{A}$  in Theorem \ref{classAcylinders}, in terms of parallel families of isoparametric hypersurfaces of the factors and a solution of a certain second order ODE (see Theorem~\ref{thm:cmc}). Finally, we classify all hypersurfaces of $\mathbb{Q}_{c_1}^{k} \times \mathbb{Q}_{c_1}^{n-k+1}$, $2\leq k\leq n-1$, in class $\mathcal{A}$ that have constant mean curvature and constant product angle function  (see Theorem~\ref{thm:cmccfconst}).  Hypersurfaces of $\mathbb{S}^2\times \mathbb{S}^2$ and $\mathbb{H}^2\times \mathbb{H}^2$ with constant product angle function and either constant mean curvature or constant scalar curvature were classified in \cite{LVWY} and \cite{LVWY2}, respectively, after a partial classification for $\mathbb{S}^2\times \mathbb{S}^2$  was obtained in \cite{U}.

 \section{Preliminaries}

In this section, following  \cite{LTV}, we summarize the basic equations of hypersurfaces of a product of two space forms. 

Given an isometric immersion  
$
f\colon M^n \to \mathbb{Q}_{c_1}^{k}\times \mathbb{Q}_{c_2}^{n-k+1}
$, $2\leq k\leq n-1$, 
into a product of two space forms of constant sectional curvatures $c_1$ and $c_2$, 
let  $N$ be a (local) unit normal vector field along $f$ and let $A$ be the shape operator of $f$ with respect to $N$. We denote by  
$
\pi_2\colon \mathbb{Q}_{c_1}^{k}\times \mathbb{Q}_{c_2}^{n-k+1}\to \mathbb{Q}_{c_2}^{n-k+1}
$ 
both the projection onto 
$\mathbb{Q}_{c_2}^{n-k+1}$ 
and its derivative, and define $t=t_f\in C^{\infty}(M)$,
$\xi=\xi_f\in \Gamma(N_fM)$ and $R=R_f\in \Gamma(T^*M\otimes TM)$ by
$$\pi_2N=f_*\xi+tN$$ 
and 
\begin{equation}\label{eq:L} \pi_2 f_*X=f_*RX+\langle X, \xi\rangle N.
\end{equation}
From
$\pi_2^2=\pi_2$
it follows that
\begin{equation}\label{eq:pi1}
R(I-R)X=\langle X, \xi\rangle \xi,\,\,\,\,\,((1-t)I-R)\xi=0\,\,\,\,\,\mbox{and}\,\,\,\,\, t(1-t)=\|\xi\|^2
\end{equation}
for all $X\in \mathfrak{X}(M)$. Then \eqref{eq:L} implies that $R$ is symmetric and, by the first equation in \eqref{eq:pi1}, at each  $x\in M^n$ either $\xi$ vanishes at $x$ and $T_xM$ decomposes orthogonally as $T_xM=\ker R \oplus \ker (I-R)$, or $T_xM=\ker R \oplus \ker (I-R)\oplus \mbox{span} \, \{\xi\}$. In the former case,  $0$ and $1$ are the only eigenvalues of $R$, with $\ker R$ and $\ker (I-R)$ as the corresponding eigenspaces. In the latter, in view of the second equation in \eqref{eq:pi1},  the eigenvalues of $R$ are $0$, $1$ and $r=1-t$, with corresponding eigenspaces $\ker R$, $\ker (I-R)$ and $ \mbox{span} \, \{\xi\}$. In Lemma 3.2 of~\cite{MT}, it was shown that the subspaces $\ker R$ and $\ker (I-R)$ give rise to smooth subbundles of $TM$  on open subsets where they have constant dimension.

A hypersurface 
$
f\colon M^{n}\rightarrow \mathbb{Q}_{c_1}^{k}\times \mathbb{Q}_{c_2}^{n-k+1}
$, $2\leq k\leq n-1$,
is said to \emph{split} if $M^n$ is (isometric to) a Riemannian product 
$N_1^{k-1}\times \mathbb{Q}_{c_2}^{n-k+1}$ 
or 
$\mathbb{Q}_{c_1}^{k}\times N_2^{n-k}$
and either 
$
f=f_{1}\times i_{2}\colon N_1^{k-1}\times \mathbb{Q}_{c_2}^{n-k+1}\to  \mathbb{Q}_{c_1}^{k}\times \mathbb{Q}_{c_2}^{n-k+1} 
$ 
or 
$
f=i_{1}\times f_{2}\colon \mathbb{Q}_{c_1}^{k}\times N_2^{n-k}\rightarrow \mathbb{Q}_{c_1}^{k}\times \mathbb{Q}_{c_2}^{n-k+1},
$ 
where 
$
f_{1}\colon N_1^{k-1}\rightarrow \mathbb{Q}_{c_1}^{k}
$ 
and 
$
f_{2}\colon N_2^{n-k}\rightarrow  \mathbb{Q}_{c_2}^{n-k+1}
$ 
are hypersurfaces and $i_{1}$ and $i_{2}$ are the respective identity maps.

The following is a consequence of Proposition $3.3$ in \cite{MT}.
\begin{proposition}\label{product locally splits}
     A hypersurface $f\colon M^{n}\rightarrow \mathbb{Q}_{c_1}^{k}\times \mathbb{Q}_{c_2}^{n-k+1}$, $2\leq k\leq n-1$, splits locally if and only if $\xi$ vanishes, or equivalently, if and only if the function $r$ has a constant value $0$ or $1$.
\end{proposition}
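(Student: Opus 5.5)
The plan is to prove the two implications directly, using the algebraic relations \eqref{eq:pi1} together with Proposition 3.3 of \cite{MT}, which the statement is said to follow from.

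\emph{The two conditions on $\xi$ and $r$ are equivalent.} By \eqref{eq:pi1} we have $\|\xi\|^2=t(1-t)=r(1-r)$ with $r=1-t$. Hence $\xi=0$ on an open set exactly when $r\in\{0,1\}$ at each of its points. Since $r$ is continuous, on a connected open set this means $r$ is constantly $0$ or constantly $1$.

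\emph{Splitting implies $\xi=0$.} Suppose $f=f_1\times i_2$. Then the unit normal is $N=(N_1,0)$, with $N_1$ normal to $f_1$. So $\pi_2N=0$, which gives $\xi=0$ and $t=0$. In the case $f=i_1\times f_2$ we get $N=(0,N_2)$. Then $\pi_2N=N$, so again $\xi=0$, now with $t=1$.

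\emph{$\xi=0$ implies local splitting.} This is the substantive direction, and the step I expect to be the main obstacle is showing that the two eigendistributions of $R$ are parallel. Assume $\xi=0$ on a connected open set. By the above, $t$ is constant, say $t=0$ (the case $t=1$ is symmetric).

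Then $N$ is tangent to the first factor. By \eqref{eq:L}, $R$ is the orthogonal projection onto the pullback of $T\mathbb{Q}_{c_2}^{n-k+1}$. Counting dimensions, $\dim\ker(I-R)=n-k+1$, so $\pi_2f_*$ maps $\ker(I-R)$ isomorphically onto the whole tangent space of the second factor. Likewise $\ker R$ has dimension $k-1$.

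Next, differentiate $\pi_2N=tN$, using that $\pi_2$ is parallel in the product. This gives $\pi_2 f_*AX=tf_*AX$, that is, $RA=tA$. For $t=0$ this says $RA=0$, so $A$ vanishes on $\ker(I-R)$ and maps into $\ker R$.

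Now differentiate $\pi_2f_*X=f_*RX$ covariantly. The Gauss formula gives
\[
f_*(\nabla_Y R)X+\langle AY,RX\rangle N=\pi_2(\langle AY,X\rangle N)=0 .
\]
Hence $\nabla R=0$, and therefore $\ker R$ and $\ker(I-R)$ are parallel, complementary distributions. By de Rham's theorem, $M$ is locally a Riemannian product $N_1^{k-1}\times N_2^{n-k+1}$.

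The product-preserving-immersion argument (Moore's lemma, or directly the fact that $\pi_1\circ f$ is constant along leaves of $\ker(I-R)$ and $\pi_2\circ f$ is constant along leaves of $\ker R$) then shows that $f=f_1\times f_2$.

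Finally, $f_2$ is a local isometry between manifolds of equal dimension $n-k+1$, so it is locally the identity of $\mathbb{Q}_{c_2}^{n-k+1}$. Meanwhile $f_1\colon N_1^{k-1}\to\mathbb{Q}_{c_1}^k$ is a hypersurface. Thus $f$ splits locally.
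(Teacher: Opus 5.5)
Your proof is correct. The paper gives no argument of its own here: it records the statement as a consequence of Proposition~3.3 of \cite{MT}, and leaves the specialization to hypersurfaces implicit.

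Your proposal instead gives a self-contained proof in the hypersurface setting, using only \eqref{eq:L}, \eqref{eq:pi1} and the structure equations:
\begin{itemize}
\item $\xi=0$ forces $t\equiv 0$ or $t\equiv 1$, so $\pi_2N\in\{0,N\}$. This fixes the ranks of $\ker R$ and $\ker(I-R)$ at $k-1,\,n-k+1$ (respectively $k,\,n-k$).
\item $\nabla R=0$ gives a local de~Rham splitting of $M$.
\item $\pi_1\circ f$ and $\pi_2\circ f$ are constant along the leaves of $\ker(I-R)$ and $\ker R$, respectively, so $f=f_1\times f_2$.
\item The equidimensional factor is a local isometry, hence locally the identity.
\end{itemize}

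Two small simplifications are available. First, $\nabla R=0$ is immediate from \eqref{derR} with $\xi=0$, so you need not differentiate \eqref{eq:L} again. Second, the identity $RA=tA$, which is just \eqref{derS2} with $\xi=0$, is never used afterwards: the vanishing you need comes from $\pi_2N=0$ itself.

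Your route buys independence from the external reference, and it shows plainly why codimension one forces one factor to be an open piece of a space form. The citation buys brevity.
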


Computing the tangent and normal components in
$\nabla\pi_2=\pi_2\nabla$, 
applied to both tangent and normal vectors, and using the Gauss and Weingarten formulae,  we obtain
\begin{equation}\label{derR}
(\nabla_XR)Y= \langle Y, \xi\rangle AX+\langle AX,Y\rangle \xi,
\end{equation}
\begin{equation}\label{derS2}
\nabla_X\xi=(tI-R)AX
\end{equation}
for any $X\in\mathfrak{X}(M)$, 
and 
\begin{equation}\label{dert2}
\operatorname{grad} t=-2A\xi.
\end{equation}

We will need the following fact.

\begin{lemma}\label{int} 
The distributions $\ker R$ and $\ker (I-R)$ are integrable.
\end{lemma}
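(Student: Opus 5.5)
Both distributions are eigendistributions of the symmetric endomorphism $R$, and I will derive integrability directly from the formula \eqref{derR} for $\nabla R$. I work on an open subset where $\ker R$ and $\ker(I-R)$ have constant dimension, so that they are smooth subbundles by Lemma 3.2 of \cite{MT}. The key observation is that both $\ker R$ and $\ker(I-R)$ lie in the orthogonal complement of $\xi$. Where $\xi\neq 0$ this follows because $\xi$ spans the eigenspace for the third eigenvalue $r=1-t$. Where $\xi=0$ it is trivial. Hence $\langle Y,\xi\rangle=0$ for every section $Y$ of either distribution.

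For $\ker R$, I take $X,Y\in\Gamma(\ker R)$ and differentiate $RY=0$ to get $R\nabla_XY=-(\nabla_XR)Y$. By \eqref{derR} and $\langle Y,\xi\rangle=0$, this reduces to
\[
R\nabla_XY=-\langle AX,Y\rangle\,\xi .
\]
Swapping the roles of $X$ and $Y$ and subtracting, and using that $A$ is symmetric, gives $R[X,Y]=0$. So $[X,Y]\in\ker R$, and $\ker R$ is involutive.

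For $\ker(I-R)$ the argument is the same. For $X,Y\in\Gamma(\ker(I-R))$, differentiating $(I-R)Y=0$ gives
\[
(I-R)\nabla_XY=(\nabla_XR)Y=\langle AX,Y\rangle\,\xi .
\]
This expression is symmetric in $X$ and $Y$, so $(I-R)[X,Y]=0$. Frobenius' theorem then yields integrability of both distributions on each such open subset.

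The only delicate point is the regularity issue: the dimensions of $\ker R$ and $\ker(I-R)$ may jump, and $\xi$ may vanish on part of $M$. I handle this by restricting to the open dense subset where the dimensions are locally constant, which is where the statement is meaningful. The fact that both kernels are orthogonal to $\xi$ holds pointwise everywhere, so no further case distinction is needed.
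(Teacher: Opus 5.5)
Your proof is correct and follows the paper's argument: differentiate $RY=0$ (resp. $(I-R)Y=0$) using \eqref{derR}, note that the resulting expression $\mp\langle AX,Y\rangle\xi$ is symmetric in $X,Y$ because $A$ is symmetric, and conclude $R[X,Y]=0$ (resp. $(I-R)[X,Y]=0$). You also make explicit two points the paper uses tacitly, both of which you handle correctly: that $\ker R$ and $\ker(I-R)$ are orthogonal to $\xi$, which is what lets you drop the term $\langle Y,\xi\rangle AX$, and that you restrict to the open dense set where these kernels have locally constant rank.
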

\begin{proof}
If $X, Y\in \ker R$, then \eqref{derR} gives
$$R\nabla_XY=-\langle AX,Y\rangle \xi=-\langle AY,X\rangle \xi=R\nabla_YX,$$
hence $R[X,Y]=0$. Similarly, if $X, Y\in \ker (I-R)$, then
$$(I-R)\nabla_XY=\langle AX,Y\rangle \xi=\langle AY,X\rangle \xi=(I-R)\nabla_YX,$$
thus  $(I-R)[X,Y]=0$. 
\end{proof}
Under the assumption that $AR=RA$, we also have the following (see Lemma~$8$ of \cite{CT}). 
\begin{lemma}\label{totgeo} 
If $AR=RA$, then the distributions $(\ker R)^\perp$ and $(\ker (I-R))^\perp$ are totally geodesic.
\end{lemma}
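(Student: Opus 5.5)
The plan is to use the derivative formula \eqref{derR} together with the commutation $AR=RA$. On an open set where $\xi\neq 0$ we have the orthogonal decomposition $T M=\ker R\oplus\ker(I-R)\oplus\mathrm{span}\{\xi\}$. Then $(\ker R)^\perp=\ker(I-R)\oplus\mathrm{span}\{\xi\}$ and $(\ker(I-R))^\perp=\ker R\oplus\mathrm{span}\{\xi\}$. Where $\xi=0$ on an open set, the hypersurface splits locally by Proposition \ref{product locally splits}, and both distributions are then the tangent distributions of the factors, which are totally geodesic. So the main case is $\xi\neq0$.

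First I record the consequence of $AR=RA$. Since $A$ commutes with $R$, it preserves each eigenspace of $R$. In particular $A\xi=\lambda\xi$ for some function $\lambda$, and $A$ leaves $\ker R$ and $\ker(I-R)$ invariant.

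For $(\ker R)^\perp$, I need $\langle\nabla_XY,Z\rangle=0$ for $X,Y\in(\ker R)^\perp$ and $Z\in\ker R$. Since $RZ=0$ and $R$ is symmetric,
$$\langle\nabla_XY,Z\rangle=-\langle Y,\nabla_XZ\rangle .$$
Differentiating $RZ=0$ gives $(\nabla_XR)Z+R\nabla_XZ=0$, so by \eqref{derR}
$$R\nabla_XZ=-\langle Z,\xi\rangle AX-\langle AX,Z\rangle\xi=-\langle AX,Z\rangle\xi .$$
Here $\langle Z,\xi\rangle=0$ because $Z\in\ker R\perp\xi$. Moreover $AX\in(\ker R)^\perp$, since $A$ preserves $(\ker R)^\perp$, so $\langle AX,Z\rangle=0$. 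Hence $R\nabla_XZ=0$, i.e. $\nabla_XZ\in\ker R$, and therefore $\langle Y,\nabla_XZ\rangle=0$ for $Y\in(\ker R)^\perp$. This shows $(\ker R)^\perp$ is totally geodesic.

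The case of $(\ker(I-R))^\perp$ is symmetric. Take $Z\in\ker(I-R)$ and differentiate $(I-R)Z=0$ to get
$$(I-R)\nabla_XZ=(\nabla_XR)Z=\langle Z,\xi\rangle AX+\langle AX,Z\rangle\xi .$$
Both terms vanish for $X\in(\ker(I-R))^\perp$, by orthogonality and $A$-invariance. Hence $\nabla_XZ\in\ker(I-R)$, which is orthogonal to $Y$.

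The only delicate point I expect is smoothness and the constancy of the dimensions of the distributions; Lemma 3.2 of \cite{MT} covers this. One should also handle the boundary between the regions $\xi=0$ and $\xi\ne0$ by working on the open dense set where the dimensions are locally constant.
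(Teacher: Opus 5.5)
Your argument is correct. Differentiating $RZ=0$ (resp.\ $(I-R)Z=0$) and using \eqref{derR}, both terms $\langle Z,\xi\rangle AX$ and $\langle AX,Z\rangle\xi$ vanish: the first because $\xi\perp\ker R\oplus\ker(I-R)$, the second because $A$ preserves the eigenspaces of $R$. So $\ker R$ (resp.\ $\ker(I-R)$) is invariant under $\nabla_X$ for $X$ in its orthogonal complement, which is exactly the total geodesy of that complement.

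The paper does not prove this lemma itself (it cites Lemma~8 of \cite{CT}), but your computation is of the same kind as its proof of Lemma~\ref{int}. Your case split is unnecessary. By \eqref{eq:pi1}, $\langle Z,\xi\rangle\xi=R(I-R)Z=0$ for every $Z\in\ker R\cup\ker(I-R)$ at every point. So the computation works uniformly wherever the dimensions of $\ker R$ and $\ker(I-R)$ are locally constant, including where $\xi=0$.
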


 The Gauss and Codazzi equations of $f$ are
\begin{equation*}\label{Gauss}
\mathcal{R}(X,Y)=c_1(X\wedge Y-X\wedge RY-RX\wedge Y) + (c_1+ c_2)RX\wedge RY +AX\wedge AY
\end{equation*}
and 
\begin{equation*}\label{codazzi}
(\nabla_XA)Y-(\nabla_YA)X=
(c_1(I-R)-c_2R)(X\wedge Y)\xi
\end{equation*}
for all $X,Y\in\mathfrak{X}(M),$ where  $\mathcal{R}$ denotes the curvature tensor  of $M.$

Given a hypersurface 
$
f\colon\,M^n
\to \mathbb{Q}_{c_1}^{k}\times \mathbb{Q}_{c_2}^{n-k+1}
$, 
denote 
$\tilde f=j\circ f$, 
where 
\begin{equation}\label{eq: inclusion j}
j\colon \mathbb{Q}_{c_1}^{k}\times\mathbb{Q}_{c_2}^{n-k+1}\to \mathbb{R}^{N_1}_{\sigma(c_1)}\times \mathbb{R}^{N_2}_{\sigma(c_2)}=\mathbb{R}^{N_1+N_2}_{\mu}
\end{equation} 
is the inclusion. 
Here, for $c\in \mathbb{R}$,
$$
\sigma(c)=
\begin{cases}
0, & \text{if } c>0,\\
1, & \text{if }  c<0,
\end{cases}
\qquad \mu=\sigma(c_1)+ \sigma(c_2),
$$
$$
N_1=
\begin{cases}
k+1, & \text{if }  c_1\neq 0,\\
k, & \text{if } c_1=0,\\
\end{cases}
\qquad\text{and}\qquad
N_2=
\begin{cases}
n-k+2, & \text{if } c_2\neq 0,\\
n-k+1, & \text{if }  c_2=0.\\
\end{cases}
$$
If $c_i\neq 0$, $1\leq i\leq 2$, write $c_i=\epsilon_i/r_i^2$, where $\epsilon_i$ is either $1$ or $-1$, according to whether $c_i>0$ or $c_i<0$, respectively. Let 
$
\tilde \pi _i  \colon \mathbb{R}^{N_1+N_2}_{\mu} \to  \mathbb{R}^{N_i}_{\sigma(c_i)}
$ 
denote the orthogonal projection, $1\leq i\leq 2$. 
Then the vector fields  
$$
\nu_1=\nu_1^{\tilde f}=\frac{1}{r_1}\tilde\pi_1\circ \tilde f
\quad \mbox{and} \quad
\nu_2=\nu_2^{\tilde{f}}=\frac{1}{r_2}\tilde\pi_2\circ \tilde f
$$
are  normal  to $\tilde f$, $\langle\nu_i, \nu_i\rangle=\epsilon_i$, $1\leq i\leq 2$, 
\begin{equation}
\label{eq:nu}A^{\tilde f}_{\nu_1} = -\frac{1}{r_1}(I-R)\,\,\,\,\,\mbox{and}\,\,\,\,\,\,A^{\tilde f}_{\nu_2} = -\frac{1}{r_2}R.
\end{equation}
 If $c_1=0$ and $0\neq c_2=\epsilon_2/r_2^2$, then the vector field
$
\nu_2=\frac{1}{r_2}\tilde\pi_2\circ \tilde f
$
is normal  to $\tilde f$, $\langle\nu_2, \nu_2\rangle=\epsilon_2$, and 
\begin{equation}
\label{eq:nu2}A^{\tilde f}_{\nu_2} = -\frac{1}{r_2}R.
\end{equation}

\section{Hypersurfaces in class $\mathcal{A}$} 

A hypersurface 
$
f\colon M^{n}\rightarrow \mathbb{Q}_{c_1}^{k}\times \mathbb{Q}_{c_2}^{n-k+1}
$  
is said to be in \emph{class}~$\mathcal{A}$ if its shape operator $A$ commutes with the tensor $R$. If $j$ is the inclusion defined in \eqref{eq: inclusion j}, then \eqref{eq:nu}, \eqref{eq:nu2} and the Ricci equation for $\tilde f=j\circ f$ imply that  $f$ is in \emph{class}~$\mathcal{A}$ if and only if $\tilde f$ has flat normal bundle. In particular, in case the ambient space is either  $\mathbb{R}\times \mathbb{S}^n$ or  $\mathbb{R}\times \mathbb{H}^n$, our definition reduces to the requirement that the tangent component of the vector field $\partial/\partial t$ spanning the factor $\mathbb{R}$ be a principal direction of the hypersurface.

Clearly, any hypersurface 
$
f\colon M^{n}\rightarrow \mathbb{Q}_{c_1}^{k}\times \mathbb{Q}_{c_2}^{n-k+1}
$, $2\leq k\leq n-1$, 
that splits is in class~$\mathcal{A}$. Next, we show how to construct hypersurfaces in class $\mathcal{A}$ that do not split in any open subset.
   
Let $h\colon M_1^{k-1} \to \mathbb{Q}_{c_1}^{k}$ and $g \colon  M_2^{n-k} \to  \mathbb{Q}_{c_2}^{n-k+1}$, $2 \leq  k \leq n -1$, be  hypersurfaces with unit normal vector fields $N^h$ and $N^g$, respectively. 
Consider the families of parallel hypersurfaces 
$
h_s\colon M_1^{k-1}\to \mathbb{Q}_{c_1}^{k}
$ 
and 
$
g_s \colon  M_2^{n-k} \to \mathbb{Q}_{c_2}^{n-k+1}
$ 
of $h$ and $g$, which are given, as maps into  $\mathbb{R}^{N_1}_{\sigma(c_1)}$ and 
$ \mathbb{R}^{N_2}_{\sigma(c_2)}$, respectively, by
\begin{equation*}\label{h_s}
h_s (x_1) = C_{\epsilon_1}( s / r_1) h(x_1) + r_1 S_{\epsilon_1}(s/r_1) N^h(x_1), \,\, \epsilon_1 = \operatorname{sgn}(c_1), \,\, c_1=\epsilon_1/r_1^2,
\end{equation*}
and 
\begin{equation*}\label{g_s}
g_s (x_2) = C_{\epsilon_2}( s / r_2) g(x_2) + r_2 S_{\epsilon_2}(s/r_1) N^g(x_2), \,\, \epsilon_2 = \operatorname{sgn}(c_2), \,\, c_2=\epsilon_2/r_2^2,
\end{equation*}
for all  $x_1 \in M_1^{k-1}$ and $x_2 \in M_2^{n-k}$. Here, for $\epsilon \in \{-1,0,1\}$, 
$$
C_\epsilon(s)=
\begin{cases}
\cos  s, & \text{if } \epsilon =1,\\
1, & \text{if } \epsilon=0,\\
\cosh  s, & \text{if } \epsilon = -1,
\end{cases}
\qquad\text{and}\qquad
S_\epsilon(s)=
\begin{cases}
\sin  s, & \text{if } \epsilon=1,\\
s, & \text{if } \epsilon=0,\\
\sinh s, & \text{if } \epsilon=-1.
\end{cases}
$$

Let $a,b \colon I= (-\delta,\delta) \to \mathbb{R}$ be smooth functions such that 
\begin{equation}\label{ab}
a(0)=0=b(0),\,\,\,  a'(s), b'(s) >0 \,\text{ and } \, (a'(s))^2 + (b'(s))^2 =1
\end{equation}
for all $s \in I.$
Define 
$$
f \colon M^n=I \times M_1^{k-1} \times M_2^{n-k} \to \mathbb{Q}_{c_1}^{k} \times \mathbb{Q}_{c_2}^{n-k+1} \subset \mathbb{R}^{N_1}_{\sigma(c_1)}\times \mathbb{R}^{N_2}_{\sigma(c_2)}=\mathbb{R}^{N_1+N_2}_{\mu}$$
 by 
\begin{equation}\label{definition of f-1}
f(s,x_1,x_2) = h_{a(s)}(x_1) +  g_{b (s)}(x_2).
\end{equation}

Next, we show that the restriction of $f$ to the subset of $M^n$ of its regular points defines a hypersurface in class $\mathcal{A} $ that does not split in any open subset.

For each $x=(s,x_1,x_2) \in M$, we have 
\begin{equation}\label{eq1 - derivada da f}
 f_*(x)X_1 = (h_{a(s)})_\ast(x_1) X_1\quad 
f_*(x)X_2 = (g_{b(s)})_\ast(x_2) X_2
\end{equation}
for all $X_1 \in T_{x_1}M_1$ and $X_2 \in T_{x_2}M_2$, and 
\begin{equation}\label{eq2 - derivada da f} 
f_*(x) \dfrac{\partial}{\partial s} =  a'(s) N_{a(s)}^h(x_1) + b'(s)N^g_{b(s)}(x_2).
\end{equation}
Here $\partial / \partial s$ is a unit vector field along $I$,  
$$
N^h_{a(s)}(x_1) = -\frac{\epsilon_1}{r_1}S_{\epsilon_1}\left( \frac{a(s)}{r_1} \right)h(x_1) + C_{\epsilon_1}\left( \frac{a(s)}{r_1} \right)N^h(x_1)
$$ 
and 
$$
N^g_{b(s)}(x_2) =  -\frac{\epsilon_2}{r_2}S_{\epsilon_2}\left( \frac{b(s)}{r_2} \right) g(x_2)+ C_{\epsilon_2}\left( \frac{b(s)}{r_2} \right) N^g(x_2).
$$
Therefore, a point $x=(s,x_1,x_2) \in M^n$ is regular for $f$ if and only if $h_{a (s)}$ is regular at $x_1$ and $g_{b (s)}$ is regular at  $x_2$,  in which case $N_{a(s)}^h(x_1)$ is a unit normal vector to $h_{a (s)}$ at $x_1$ and  $N_{b (s)}^g$ is a unit normal vector to $g_{b(s)}$ at $x_2.$

It follows from \eqref{eq1 - derivada da f} and \eqref{eq2 - derivada da f} that the vector field given by
\begin{equation*}
\eta(x) = - b '(s)N_a^h(x_1) + a'(s)N^g_b (x_2)
\end{equation*}
is a unit normal vector to $f$ at $(s,x_1,x_2)$. Differentiating $\eta$ with respect to the connection $\tilde \nabla$ in $\mathbb{R}_\mu^{N_1+N_2}$, we obtain
\begin{equation}\label{derivada1 --->eta}
\tilde \nabla_{X_1} \eta = b '(s)\, (h_a)_\ast A_a^h X_1 \quad \text{and}\quad
\tilde \nabla_{X_2} \eta = -a'(s)\, (g_{b})_\ast A_b^g X_2, 
\end{equation}
for all $X_1 \in T_{x_1}M_1$ and $X_2 \in T_{x_2}M_2$, where $A_{a}^h$ and $A_{b}^g$ denote the shape operators of $h_{a (s)}$ and $g_{ b(s)}$ with respect to the unit normal vector fields $N^h_a $ and $N^g_b$, respectively,
and
\begin{equation}\label{derivada2 --->eta}
\tilde \nabla_{\frac{\partial}{\partial s}} \eta = - b ''(s)\, N_a^h(x_1) + a''(s)\, N^g_b(x_2).
\end{equation}
 Thus,
\begin{equation*}\label{ddsprincipal}
\langle f_* \frac{\partial}{\partial s}, \tilde \nabla_{X_1} \eta \rangle = 0 =\langle f_* \frac{\partial}{\partial s}, \tilde \nabla_{X_2} \eta\rangle
\end{equation*}
for all $X_1 \in TM_1$ and   $X_2 \in T_{x_2}M_2$, and 
\begin{equation*}\label{x1x2}
\langle f_* X_2, \tilde \nabla_{X_1} \eta \rangle = 0 =\langle f_* X_1, \tilde \nabla_{X_2} \eta \rangle.
\end{equation*}
Writing $a'(s) = \cos \theta(s)$ and $b'(s) = \sin \theta (s) $ for $\theta \in C^\infty(I)$,  from \eqref{derivada2 --->eta} we obtain
\begin{equation}
\begin{split} 
\langle \tilde \nabla_{\frac{\partial}{\partial s}} \eta, f_\ast \frac{\partial}{\partial s}\rangle 
&=\langle - b''(s)N_a^h(x_1) + a''(s)N^g_b(x_2), a'(s) N_a^h(x_1) + b'(s)N^g_{b}(x_2)\rangle  \\
&= -b''(s)a'(s) + a''(s)b'(s)  =-\theta'(s) 
\end{split}\nonumber
\end{equation}
for all $s \in I$. Thus, 
$
A^f_\eta \frac{\partial}{\partial s} = \theta'(s) \frac{\partial}{\partial s} .
$
Now, using the first equation in \eqref{derivada1 --->eta}, for any $x=(s,x_1,x_2)$ we obtain
\begin{equation*}
\begin{split}
    f_* A^f_\eta X_1 &= -\tilde \nabla_{X_1} \eta \\
    &= - b '(s) (h_a)_\ast A_a^h X_1 \\
    &=-b '(s) f_\ast A^h_a X_1,
\end{split}
\end{equation*}
for all $X_1 \in T_{x_1}M_1$. Similarly, the second equation in \eqref{derivada1 --->eta} gives
\begin{equation*}
    f_* A^f_\eta X_2 = a'(s) f_\ast A_b^g X_2,
\end{equation*}
for any $X_2 \in T_{_2}M_2.$
Therefore
$$
A^f_\eta X_1 = - b'(s) A_a^h X_1\quad \text{and}\quad
A^f_\eta X_2 = a'(s)\, A_b^g X_2,
$$
for all $X_1 \in T_{x_1}M_1$ and $X_2 \in T_{x_2}M_2$. 
Summarizing, the shape operator $A^f_\eta$ at $x=(s,x_1,x_2)$ is given by
\begin{equation}\label{A-eigenspaces}
A^f_\eta|_{T_{x_1}M_1}=-b'(s)\, A_a^h,\quad 
{A^f_\eta}|_{T_{x_2}M_2} =  a'(s)  A_b^g, \quad \text{and} \quad
A^f_\eta \frac{\partial}{\partial s} = \theta'(s) \frac{\partial}{\partial s}.
\end{equation}

Now, a straightforward computation gives
\begin{equation}\label{RR}
R_f|_{TM_1}=0,\,\,\,\,\,\, R_f|_{TM_2}=I,\,\,\,\,\,\,R_f\dfrac{\partial}{\partial s}=\sin^2 \theta(s)\dfrac{\partial}{\partial s},
\end{equation}
and
\begin{equation}\label{R at dt}
 \xi=\frac{1}{2}\sin 2\theta(s) \,\dfrac{\partial}{\partial s}.
\end{equation}

    It follows from \eqref{A-eigenspaces} and \eqref{RR} that $A^f_\eta$ and $R_f$ commute, hence $f$ belongs to class~$\mathcal{A}$. Moreover, by Lemma \ref{product locally splits} it does not split in any open subset, for $\sin 2\theta(s) \neq 0$ for all $s\in I$. 
    
We have thus proven the direct statement of the following result.

\begin{theorem} \label{classAcylinders} The map $f$ defines, on the open subset of $I\times M_1\times M_2$ of its regular points, a hypersurface of  $\mathbb{Q}_{c_1}^{k} \times \mathbb{Q}_{c_2}^{n-k+1}$, $2\leq k\leq n-1$, in class $\mathcal{A}$. 

Conversely, any hypersurface $f\colon M^{n}\to  \mathbb{Q}_{c_1}^{k} \times \mathbb{Q}_{c_2}^{n-k+1}$, $2\leq k\leq n-1$,
in class $\mathcal{A}$ that does not split in any open subset is given locally in this way in an open and dense subset of $M^n$.
\end{theorem}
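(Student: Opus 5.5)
Only the converse needs proof, since the direct statement was established above. Let $f\colon M^n\to \mathbb{Q}_{c_1}^{k}\times \mathbb{Q}_{c_2}^{n-k+1}$ be in class $\mathcal{A}$ and not split on any open subset. By Proposition~\ref{product locally splits}, $\xi\neq 0$ on an open dense subset. By \eqref{eq:pi1}, at such points $T_xM=\ker R\oplus\ker(I-R)\oplus\mathrm{span}\{\xi\}$ with $r=1-t\in(0,1)$.

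Restricting further to the open dense subset where $\dim\ker R$ and $\dim\ker(I-R)$ are locally constant, we obtain smooth orthogonal distributions $E_0=\ker R$, $E_1=\ker(I-R)$ and $E_\xi=\mathrm{span}\{\xi\}$. The dimensions are then forced: $\dim E_0=k-1$ and $\dim E_1=n-k$. To see this, note that $\mathrm{tr}\,R=\mathrm{tr}\,\pi_2|_{T\mathbb{Q}}-t=(n-k+1)-t$, and $\mathrm{tr}\,R=\dim E_1+1-t$.

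Since $AR=RA$, $A$ preserves each eigenspace of $R$, so $\xi$ is a principal direction, $A\xi=\lambda\xi$. By Lemma~\ref{int}, $E_0$ and $E_1$ are integrable. By Lemma~\ref{totgeo}, $E_0^\perp=E_1\oplus E_\xi$ and $E_1^\perp=E_0\oplus E_\xi$ are totally geodesic, hence also integrable. Then $E_\xi=E_0^\perp\cap E_1^\perp$ is integrable as well, and the three mutually orthogonal foliations give a local product chart $M\supset U\cong I\times M_1\times M_2$. In this chart the leaves of $E_0$, $E_1$ and $E_\xi$ are the slices $\{s\}\times M_1\times\{x_2\}$, $\{s\}\times\{x_1\}\times M_2$ and $I\times\{(x_1,x_2)\}$, respectively.

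Next I would show that $t$ depends only on the $E_\xi$-direction. From \eqref{dert2}, $\operatorname{grad}t=-2A\xi=-2\lambda\xi$, so $t$ is constant along the leaves of $E_0\oplus E_1$. I would also show that the unit field $\partial_s=\xi/\|\xi\|$ has geodesic integral curves, and that its flow maps leaves of $E_0\oplus E_1$ to one another isometrically in the appropriate sense. The tools here are \eqref{derS2}, $\nabla_X\xi=(tI-R)AX$, the total geodesicity from Lemma~\ref{totgeo}, and the Codazzi equation. With these I would choose $s$ as arclength along $\xi$ and define $\theta(s)$ by $\sin^2\theta=r=1-t$, which agrees with \eqref{RR}.

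To recover $h$ and $g$, fix $s=0$ and work in $\mathbb{R}^{N_1+N_2}_\mu$. Write $\tilde\pi_1 f_*\partial_s=\cos\theta\,\nu^1$ and $\tilde\pi_2 f_*\partial_s=\sin\theta\,\nu^2$, where $\nu^1,\nu^2$ are unit vectors tangent to the respective factors; this is possible because $\|\pi_2 f_*\partial_s\|^2=\langle R\partial_s,\partial_s\rangle=\sin^2\theta$. Then:
\begin{enumerate}
\item Along an $E_0$-leaf, $\pi_2 f_*=0$, so $\tilde\pi_2\circ f$ is locally constant there. Likewise $\tilde\pi_1\circ f$ is constant on $E_1$-leaves.
\item Consequently $\tilde\pi_1\circ f$ depends only on $(s,x_1)$ and $\tilde\pi_2\circ f$ only on $(s,x_2)$, so $f(s,x_1,x_2)=F_1(s,x_1)+F_2(s,x_2)$.
\item Define $h=F_1(0,\cdot)$ and $g=F_2(0,\cdot)$. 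These are hypersurfaces of the factors, with unit normals $\nu^1$ and $\nu^2$.
\end{enumerate}
It remains to show that $s\mapsto F_1(s,x_1)$ is a unit-speed-reparametrized normal geodesic of $\mathbb{Q}_{c_1}^k$. Concretely, I need $\tilde\nabla_{\partial_s}\nu^1$ to have no component tangent to $F_1(s,M_1)$ and no component normal to it inside the factor, other than the one forced by curvature. This should follow from $E_0^\perp$ being totally geodesic together with $\langle\nabla_{\partial_s}\partial_s,X\rangle=0$. Granting this, $F_1(s,x_1)=h_{a(s)}(x_1)$ with $a'=\cos\theta$, and similarly $F_2(s,x_2)=g_{b(s)}(x_2)$ with $b'=\sin\theta$.

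The main obstacle is this last verification: that $\partial_s$-curves project onto normal geodesics of the factors and that the speeds satisfy $a'=\cos\theta$, $b'=\sin\theta$ independently of $x_1,x_2$. I expect it to need careful use of \eqref{derR} with $Y=\xi$ and of the Codazzi equation, in order to show that $\lambda$, and hence $\theta$, depends only on $s$.
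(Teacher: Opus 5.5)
Your framework matches the paper's: restrict to where $\xi\neq0$, decompose along the net $(\operatorname{span}\{\xi\},\ker R,\ker(I-R))$, note that $\pi_1\circ f$ does not depend on $x_2$ and $\pi_2\circ f$ does not depend on $x_1$, and read off $h,g$ at $s=0$. Your trace argument for $\dim\ker R=k-1$, $\dim\ker(I-R)=n-k$ is a nice detail the paper leaves implicit. \textbf{The gap} is the step you leave as ``granting this'': that each curve $s\mapsto\pi_i\circ f(s,x_1,x_2)$ is a pre-geodesic of its factor, with speed depending only on $s$. That step is the actual content of the converse, and your proposal does not prove it.

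Your forecast of what it needs is also off. The Codazzi equation is not used. That $r$ (hence $\theta$) depends only on $s$ already follows from $\operatorname{grad}t=-2\lambda\xi$, which you have. The missing argument is short:
\begin{itemize}
\item Since $A\xi=\lambda\xi$ and $R\xi=r\xi$, \eqref{derS2} gives $\nabla_\xi\xi=(1-2r)\lambda\xi$, so $\nabla_{\hat\xi}\hat\xi=0$.
\item The Gauss formula for $F=j\circ f$ then gives $\tilde\nabla_{\hat\xi}F_*\hat\xi=\lambda\,j_*N$ plus terms along $\nu_1,\nu_2$.
\item Apply the constant projection $\tilde\pi_1$. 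By \eqref{eq:L} and $\pi_2N=f_*\xi+tN$, you get $\pi_1f_*\hat\xi=(1-r)f_*\hat\xi-\sqrt{r(1-r)}\,N$ and $\pi_1N=rN-f_*\xi=-\sqrt{r/(1-r)}\,\pi_1f_*\hat\xi$.
\item Hence $\tilde\nabla_{\hat\xi}\tilde\pi_1F_*\hat\xi\in\operatorname{span}\{\tilde\pi_1F_*\hat\xi,\nu_1\}$. This says exactly that the projected curve is a pre-geodesic of $\mathbb{Q}^k_{c_1}$ with speed $\sqrt{1-r(s)}$. Likewise the other projection is a pre-geodesic of the second factor with speed $\sqrt{r(s)}$.
\item Its initial direction is normal to $h$, because $\langle\pi_1f_*\hat\xi,f_*X\rangle=-\langle f_*\hat\xi,\pi_2f_*X\rangle=0$ for $X\in\ker R$.
\end{itemize}
The paper does the same computation in the normalized form $\tilde\nabla_\xi{\pi_2}_*F_*(r^{-1/2}\hat\xi)$. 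It additionally uses $\xi(r)=2\lambda r(1-r)$ and $A^F_{\nu_2}=-R/r_2$, and shows the result is collinear with $\nu_2$.

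There is also a smaller gap in the product chart. Three mutually orthogonal integrable distributions do not in general admit a product chart. By the Reckziegel--Schaaf criterion each $E_i^\perp$ must be integrable. The one you did not check is $E_\xi^\perp=\ker R\oplus\ker(I-R)$; integrability of the line field $E_\xi$ is automatic and beside the point. The condition does hold:
\begin{itemize}
\item Since $AR=RA$, $\nabla\xi=(tI-R)A$ is symmetric, so $\xi^\perp$ is integrable.
\item Together with $\nabla_{\hat\xi}\hat\xi=0$, this makes the dual $1$-form of $\hat\xi$ closed. So locally $\hat\xi=\operatorname{grad}s$, and you may take $\partial_s=\hat\xi$ in the chart.
\end{itemize}
The paper obtains all of this at once from Theorem~2.7 of \cite{To2}, which gives a product representation that is an isometry for a polar metric $ds^2+g_1(s)+g_2(s)$. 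Finally, the flow of $\hat\xi$ is not isometric between leaves, since the metrics $g_i(s)$ vary with $s$; that property is neither true in general nor needed.
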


 To prove the converse statement of Theorem \ref{classAcylinders}, we first recall some terminology, following \cite{rs}.

A \emph {net}\index{Net} $\mathcal{E}=(E_i)_{i=1,\ldots,r}$ on a differentiable
manifold $M$ is a splitting $TM=\oplus_{i=1}^r E_i$ of its tangent bundle 
by a family of integrable subbundles.  If $M$ is a Riemannian manifold and 
the subbundles of $\mathcal{E}$ are mutually orthogonal, then $\mathcal{E}$ is called
an \emph{orthogonal net}.


For a product manifold $M=\Pi_{i=1}^r M_i$ let $\pi_i\colon M\to M_i$ 
denote the projection onto $M_i$. 
The map $\tau^{\bar x}_i\colon M_i\to M$, for 
$\bar x=(\bar x_1,\ldots,\bar x_r)\in M$, stands for the  inclusion 
of $M_i$ into $M$ given by
$$
\tau^{\bar x}_i(x_i)=(\bar x_1,\ldots,x_i,\ldots,\bar x_r),\;\;1\leq i\leq r.
$$
The \emph{product net}\index{Product net} of 
$M=\Pi_{i=1}^r M_i$  is the net 
$\mathcal{E}=(E_i)_{i=1,\ldots,r}$ on $M$ defined by
$$
E_i(x)={\tau^{x}_i}_*T_{x_i}M_i,\;\;1\leq i\leq r,
$$
for any $x=(x_1,\ldots,x_r)\in M$. If  $N$ is a smooth manifold endowed with a net $\mathcal{F}=(F_i)_{i=1,\ldots, r}$, 
a \emph{product representation}\index{Product representation} of $\mathcal{F}$ is 
a smooth diffeomorphism 
$\psi\colon M\to N$ of a product manifold $M=\Pi_{i=1}^r M_i$ onto $N$ such that
$\psi_*E_i(x)= F_i(\psi(x))$ for all $x\in M$, $1\leq i\leq r$.

A net $\mathcal{E}=(E_i)_{i=1,\ldots,r}$ on a 
smooth manifold  $M$ is  said to be \emph{locally decomposable} if for every point $x\in M$ there exist a
neighborhood $U$ of $x$ and a product representation $\psi \colon \Pi_{i=1}^r M_i\to (U, \mathcal{E}|_U)$.
It was shown in Theorem~$1$ of \cite{rs} that the net $\mathcal{E}=(E_i)_{i=1,\ldots,r}$   is locally decomposable if and only if $E_i^{\perp}:=\oplus_{j\neq i}
E_i$ is integrable for $i=1,\ldots, r$.

\begin{proof}[Proof of the converse statement of Theorem \ref{classAcylinders}]
 Since $f$ does not split in any open subset, by Lemma \ref{product locally splits} there exists an open and dense subset $\mathcal{U}\subset M^n$ where the vector field $\xi$ does not vanish. We will show that the statement holds on $\mathcal{U}$ and will always argue for $f|_{\mathcal{U}}$. 
 
 Since $\ker R$ and $\ker (I-R)$  are integrable distributions by Lemma \ref{int}, while  $(\ker R)^\perp$ and  $(\ker (I-R))^\perp$  are totally geodesic by Lemma \ref{totgeo}, it follows from Theorem $2.7$ in \cite{To2} that there exists locally a product representation  $\Phi\colon I\times M_1^{k-1}\times M_2^{n-k}\to M^{n}$ of the net $\mathcal{E}=((\operatorname{span} \{\xi\}, \ker R, \ker (I-R))$ which is an isometry with respect to a polar metric $\pi_0^*ds^2+\pi_1^*(g_1\circ \pi_0) +\pi_2^*(g_2\circ \pi_0)$ on $I\times M_1^{k-1}\times M_2^{n-k}$, where $s\in I\mapsto g_1(s)$ and $s\in I\mapsto g_2(s)$ are smooth one-parameter families of metrics on $M_1$ and $M_2$, respectively, indexed on the open interval $I\subset \mathbb{R}$, which we can assume contains $0$.
 
 For each $x_1^0\in M_1^{k-1}$ and $x_2^0\in M_2^{n-k}$, the curve $\gamma=\gamma_{(x_1^0, x_2^0)}\colon I\to M^n$  given by $\gamma(s)=\Phi(s, x_1^0, x_2^0)$
is an integral curve of the unit vector field $\hat\xi=\xi/\|\xi\|$. 
Denote $\tilde f=f\circ \Phi$.  For all $x=(s, x_1, x_2)\in I\times M_1^{k-1}\times M_2^{n-k}$ and all $X\in T_{x_1}M_1$, by \eqref{eq:L} we have 
$${\pi_2}_*\tilde f_*X={\pi_2}_*f_*\Phi_*X=f_*R\Phi_*X+\langle\Phi_*X, \xi\rangle N=0,$$
for $\Phi_*X\in \ker R$. Similarly,
${\pi_1}_*\tilde f_*X=0$ for all $X\in T_{x_2}M_2$.
Therefore, the map $\pi_1\circ \tilde f$ does not depend on $x_2$ and $\pi_2\circ \tilde f$ does not depend on $x_1$.

We claim that the curves
$\alpha\colon I\to \mathbb{Q}_{c_2}^{n-k+1}$ and $\beta\colon I\to \mathbb{Q}_{c_1}^{k}$, given by 
$\alpha=\pi_2\circ  f\circ \gamma$ and 
$\beta=\pi_1\circ  f\circ \gamma$,  are pre-geodesics of 
$\mathbb{Q}_{c_2}^{n-k+1}$ and $\mathbb{Q}_{c_1}^{k}$, respectively. 
We argue for $\alpha$, the argument for $\beta$ being similar. 
By \eqref{eq:L} and the last equation in \eqref{eq:pi1}, we have
$$
    \alpha'= {\pi_2}_*f_*\hat\xi\\
    = f_*R\hat\xi+\langle \hat\xi, \xi\rangle N\\
    = rf_*\hat\xi+r^{1/2}(1-r)^{1/2}N,  $$
whose length is $r^{1/2}$. Since  $\xi$ is an eigenvector of $A$, then $\operatorname{grad} r$ is colinear with $\xi$ by \eqref{dert2}, hence $r(\gamma(s))$ depends only on $s$, that is, it does not depend on the chosen pair $(x_1^0, x_2^0)$.

Let $\pi_2$ also denote the projection of $\mathbb{R}_\mu^{N_1+N_2}=\mathbb{R}^{N_1}_{\sigma(c_1)}\times \mathbb{R}^{N_2}_{\sigma(c_2)} $ onto $\mathbb{R}^{N_2}_{\sigma(c_2)} $, let $j$ be the inclusion defined in \eqref{eq: inclusion j}, and let $F=j\circ f$. 
Our claim will be proved once we show that
$\tilde{\nabla}_\xi {\pi_2}_*F_*(r^{-1/2}\hat\xi)$ is collinear with the unit vector field $\nu_2$ normal to 
$\mathbb{Q}_{c_2}^{n-k+1}$, where $\tilde \nabla$ is the  connection in $\mathbb{R}_\mu^{N_1+N_2}$.

From \eqref{derS2} we obtain
$$
    \nabla_{\xi}\xi=(tI-R)A\xi=(1-2r)\lambda\xi,
    $$
where $A\xi=\lambda\xi$. Thus $\nabla_\xi \hat \xi=0$, and hence
\begin{equation}\label{tilde nabla}
 \begin{array}{l}   \tilde{\nabla}_\xi {\pi_2}_*F_*(r^{-1/2}\hat\xi)=\tilde{\nabla}_\xi (r^{1/2}F_*\hat \xi+
 (1-r)^{ 1/2}j_*N)\vspace{1ex}\\
    \hspace*{15.2ex}=\xi(r^{1/2})F_*\hat \xi+r^{1/2}\tilde \nabla_{\xi} F_*\hat \xi+\xi(1-r)^{1/2}h_*N+\vspace{1ex}\\ \hspace*{17ex}+(1-r)^{1/2}\tilde \nabla_{\xi} j_*N.\end{array}
    \end{equation}

We compute below each term on the right-hand side of the preceding equation. First, from \eqref{dert2} and the last equation in \eqref{eq:pi1}, we obtain
$$\xi(r)=\langle\mbox{grad}\, r, \xi\rangle=2\langle A\xi, \xi\rangle=2\lambda r(1-r),$$
hence 
 $\lambda=\frac{1}{2}r^{-1}(1-r)^{-1}\xi(r)$. Now, we have
    \begin{eqnarray*}
    r^{1/2}\tilde \nabla_{\xi} F_*\hat \xi&=&
    r^{1/2}\langle A\xi, \hat \xi\rangle j_*N +\epsilon_2r^{1/2}\langle A^F_{\nu_2} \xi, \hat \xi\rangle (\nu_2/r_2)\\
    &=&r^{1/2}\lambda r^{1/2}(1-r)^{1/2}j_*N
    -\epsilon_2r^{1/2}r^{3/2}(1-r)^{1/2}(\nu_2/r_2)\\
    &=& \lambda r(1-r)^{1/2}j_*N-\epsilon_2 r^2(1-r)^{1/2}(\nu_2/r_2)\\
    &=& \frac{1}{2}(1-r)^{-1/2}\xi(r)j_*N    -\epsilon_2 r^2(1-r)^{1/2}(\nu_2/r_2)\\
    &=&-\xi(1-r)^{1/2} j_*N  -\epsilon_2 r^2(1-r)^{1/2}(\nu_2/r_2).
    \end{eqnarray*}

  For the last term of \eqref{tilde nabla}, we compute
 \begin{eqnarray*}
 (1-r)^{1/2}\tilde \nabla_{\xi} j_*N &=&
 (1-r)^{1/2}(-F_*A\xi)\\
    &=& -(1-r)^{1/2}\lambda F_*\xi\\
    &=& -(1-r)^{1/2}r^{1/2}(1-r)^{1/2}\lambda F_*\hat \xi  \\
    &= & -\frac{1}{2}r^{1/2}\xi(r) F_*\hat \xi\\
    &=& -\xi(r^{1/2})F_*\hat \xi.
\end{eqnarray*}

 Substituting into \eqref{tilde nabla} yields
 $$\tilde{\nabla}_\xi {\pi_2}_*F_*(r^{-1/2}\hat\xi)=-\epsilon_2 r^2(1-r)^{1/2}(\nu_2/r_2),$$
which proves our claim.

Now fix $(x_1^0, x_2^0)\in M_1^{k-1}\times M_2^{n-k}$ and define 
$h\colon M_1^{k-1} \to \mathbb{Q}_{c_1}^{k}$ and $g \colon  M_2^{n-k} \to  \mathbb{Q}_{c_2}^{n-k+1}$ by
$$h(x_1)=\tilde f(0, x_1, x_2^0)\,\,\,\,\mbox{and}\,\,\,\,g(x_2)=\tilde f(0, x_1^0, x_2).$$
Let $a,b\in C^\infty(I)$ be given by
$$a(s)=\int_0^s \sqrt{r(\gamma(\tau))}\,d\tau\quad \mbox{and}\quad b(s)=\int_0^s \sqrt{1-r(\gamma(\tau))}\,d\tau,$$
where $\gamma(s)=\Phi(s, x_1^0, x_2^0)$. Then
\begin{eqnarray*}
    \tilde f(s,x_1, x_2)&=&((\pi_1\circ \tilde f)(s,x_1, x_2), (\pi_2\circ \tilde f)(s,x_1, x_2))\\
    &=&((\pi_1\circ \tilde f)(s,x_1, x_2^0), (\pi_2\circ \tilde f)(s,x_1^0, x_2))\\
&=&(h_{a(s)}(x_1), g_{b(s)}(x_2)),
\end{eqnarray*}
where the second equality is due to the fact that the maps $\pi_1\circ \tilde f$  and $\pi_2\circ \tilde f$ do not depend on $x_2$ and $x_1$, respectively, and the last one to the fact that the maps $s\mapsto (\pi_1\circ \tilde f)(s,x_1, x_2^0)$ and $s\mapsto (\pi_2\circ \tilde f)(s,x_1^0, x_2)$ are pregeodesics of $\mathbb{Q}_{c_1}^{k}$ and $\mathbb{Q}_{c_2}^{n-k+1}$, respectively, starting in 
$(\pi_1\circ \tilde f)(0,x_1, x_2^0)=h(x_1)$ and $s\mapsto (\pi_2\circ \tilde f)(0,x_1^0, x_2)=g(x_2)$, whose velocity vectors have lengths $(1-r(\Phi(s, x_1, x_2^0))^{1/2}=(1-r(\gamma(s)))^{1/2}$ and $(r(\Phi(s, x_1^0, x_2))^{1/2}=(r(\gamma(s)))^{1/2}$, respectively. Hence $\tilde f$ is given as in \eqref{definition of f-1}, and this completes the proof. 
\end{proof}

\begin{example}\label{doubly} An important particular case of a hypersurface 
$$f \colon M^n=I \times M_1^{k-1} \times M_2^{n-k} \to \mathbb{Q}_{c_1}^{k} \times \mathbb{Q}_{c_2}^{n-k+1} \subset \mathbb{R}^{N_1}_{\sigma(c_1)}\times \mathbb{R}^{N_2}_{\sigma(c_2)}=\mathbb{R}^{N_1+N_2}_{\mu},$$
$2 \leq  k \leq n -1$, given by \eqref{definition of f-1}, arises when $h\colon M_1^{k-1} \to \mathbb{Q}_{c_1}^{k}$ and $g \colon  M_2^{n-k} \to  \mathbb{Q}_{c_2}^{n-k+1}$ are umbilical hypersurfaces. In what follows, we assume $c_1\neq 0\neq c_2$, the case $0=c_1\neq c_2$ being similar. Let $V_1$ and $V_2$ denote the linear subspaces of $\mathbb{R}^{N_1}_{\sigma(c_1)}$ and $\mathbb{R}^{N_2}_{\sigma(c_2)}$ that are parallel to the affine subspaces which contain $h(M_1)$ and  $g(M_2)$, respectively, and let $P^2_i$ be a two-dimensional subspace of $\mathbb{R}^{N_i}_{\sigma(c_i)}$ containing the line $V_i^\perp$, $1\leq i\leq 2$.
Let $P^4=P^2_1\oplus P_2^2\subset \mathbb{R}^{N_1+N_2}_{\mu}$, and let ${O}(P^2)$ be the subgroup of
${O}_{\sigma(c_1)}(k)\times {O}_{\sigma(c_2)}(n-k+ 1)$ that leaves $P^2=V_1^\perp\oplus V_2^{\perp}$
pointwise fixed. Then $f(M^n)$ is generated by the action of ${O}(P^2)$ on the image of the profile curve $\gamma\colon I\to \mathbb{Q}_{c_1}^{1} \times \mathbb{Q}_{c_2}^{1} \subset P^4$, given by $$\gamma(s)=(r_1C_{\epsilon_1}(a(s)/r_1), r_1S_{\epsilon_1}(a(s)/r_1), r_2C_{\epsilon_2}(b(s)/r_2), r_2S_{\epsilon_2}(b(s)/r_2)),$$
where $a,b\in C^{\infty}(I)$ satisfy \eqref{ab}. We call $f$ a  \emph{doubly rotational} hypersurface with $\gamma$ as profile. Doubly rotational hypersurfaces are characterized as the hypersurfaces of $\mathbb{Q}_{c_1}^{k} \times \mathbb{Q}_{c_2}^{n-k+1}$ in class~$\mathcal{A}$ that have exactly three distinct principal curvatures whose eigenbundles coincide with those of the tensor $R$.
         \end{example}

\section{Constant mean curvature hypersurfaces 
in class  $\mathcal{A}.$}

Our next result characterizes hypersurfaces with constant mean curvature in $\mathbb{Q}_{c_1}^{k} \times \mathbb{Q}_{c_2}^{n-k+1} $, $2 \leq  k \leq n -1$, that belong to class $\mathcal{A}$ and do not split in any open subset.

\begin{theorem}\label{thm:cmc} 
Let $
h \colon M_1^{k-1} \to \mathbb{Q}_{c_1}^{k}$  and $
g \colon M_2^{n-k} \to \mathbb{Q}_{c_2}^{n-k+1}
$, $2 \leq  k \leq n -1$,
be isoparametric hypersurfaces. 
Let $h_s$ and $g_s$ denote the families of parallel hypersurfaces to $h$ and $g$, respectively, with the parameter $s$ ranging on an open interval $I=(-\delta, \delta) \subset \mathbb{R}$ where $h_s$ and $g_s$ are immersions. 
Let $H^h(s)$ and $H^g(s)$ denote the (constant) mean curvatures of $h_s$ and $g_s$, respectively.  
Given $\mathcal{H} \in \mathbb{R},$ let
$
\theta \colon I \to \mathbb{R}$  
be a smooth function such that $\theta^{-1}(\{k\pi/2\,:\, k\in \mathbb{Z}\})$ has empty interior and
\begin{equation}\label{eq: differential equation a}
\theta'(s) - (k-1)\sin\theta(s) \, 
H^h(a(s)) + (n-k)\cos \theta(s)  
H^g(b(s)) = n\mathcal{H},  
\end{equation}
for all  $s \in I$, where
$$  a(s)= \int_0^s\cos \theta(u)du\quad \text{and} \quad b(s) = \int_{0}^s \sin\theta(u) \, du.$$
Then the restriction of the map  
$$
f \colon M^n = I \times M_1^{k-1} \times M_2^{n-k} \to \mathbb{Q}_{c_1}^{k} \times \mathbb{Q}_{c_2}^{n-k+1},  
$$  
defined by 
\begin{equation*}
f(s,x_1,x_2) = h_{a(s)}(x_1) +  g_{b (s)}(x_2),
\end{equation*}
to the open subset of its regular points, is a hypersurface  with constant mean curvature $\mathcal{H}$ in class $\mathcal{A}$ that does not split in any open subset.
    
Conversely, any  hypersurface 
$
f \colon M^{n} \to \mathbb{Q}_{c_1}^{k} \times \mathbb{Q}_{c_2}^{n-k+1}
$, $2 \leq k \leq n-1$,  with constant mean curvature $\mathcal{H}$ that is in class $\mathcal{A}$ and does not split in any open subset is given locally 
 in this way.  
\end{theorem}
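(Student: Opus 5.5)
The plan is to derive everything from the explicit shape operator \eqref{A-eigenspaces} of the map $f$ in Theorem~\ref{classAcylinders}, with $a'=\cos\theta$ and $b'=\sin\theta$.

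For the direct statement, Theorem~\ref{classAcylinders} already shows that $f$ is a hypersurface in class $\mathcal{A}$ on its regular set. Since $\theta^{-1}(\{k\pi/2\})$ has empty interior, $\xi=\frac12\sin 2\theta\,\partial/\partial s$ vanishes on no open subset. By Proposition~\ref{product locally splits}, $f$ therefore does not split on any open subset. Taking the trace in \eqref{A-eigenspaces} gives
\[
nH^f=\theta'(s)-\sin\theta(s)\operatorname{tr}A^h_{a(s)}+\cos\theta(s)\operatorname{tr}A^g_{b(s)}.
\]
Here $\operatorname{tr}A^h_{a(s)}=(k-1)H^h(a(s))$ and $\operatorname{tr}A^g_{b(s)}=(n-k)H^g(b(s))$. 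We must take care that the normal $N^h_{a}$ used in \eqref{A-eigenspaces} is the one defining $H^h(a)$ for the parallel family, and similarly for $g$. Equation \eqref{eq: differential equation a} then says exactly that $H^f\equiv\mathcal{H}$. The hypothesis $a'=\cos\theta$, $b'=\sin\theta$ replaces \eqref{ab}. Positivity of $a',b'$ is not essential on open sets where $\sin 2\theta\neq0$, since there one can reparametrize or change the sign of the normals.

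For the converse, first apply Theorem~\ref{classAcylinders} on the open dense set where $\xi\neq0$. This gives locally $f=h_{a(s)}+g_{b(s)}$ with $a'=\cos\theta$, $b'=\sin\theta$ and $\sin 2\theta\neq0$. The same trace identity, with $H^f=\mathcal{H}$, yields for every $(s,x_1,x_2)$
\[
\theta'(s)-\sin\theta(s)\operatorname{tr}A^h_{a(s)}(x_1)+\cos\theta(s)\operatorname{tr}A^g_{b(s)}(x_2)=n\mathcal{H}.
\]
Fix $s$ and vary $x_1$ with $x_2$ fixed. Since $\sin\theta(s)\neq0$, the function $x_1\mapsto\operatorname{tr}A^h_{a(s)}(x_1)$ is constant for every $s\in I$. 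Likewise, since $\cos\theta(s)\neq0$, the function $x_2\mapsto \operatorname{tr}A^g_{b(s)}(x_2)$ is constant.

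The remaining step, which I expect to be the main obstacle, is to pass from ``every parallel hypersurface $h_t$, $t\in a(I)$, has constant mean curvature'' to ``$h$ is isoparametric''. Since $a'=\cos\theta\neq0$, the image $a(I)$ is an open interval containing $0$, so the parallel hypersurfaces $h_t$ have constant mean curvature for all $t$ in an open interval. This is Cartan's classical characterization of isoparametric hypersurfaces of space forms, and I would prove it directly. Let $\kappa_j$ be the principal curvatures of $h$. The principal curvatures of $h_t$ are $\kappa_j(t)=\frac{\epsilon_1 S+\kappa_j C}{C-\kappa_j S}$ up to sign and scaling conventions, with $C=C_{\epsilon_1}(t/r_1)$ and $S=r_1S_{\epsilon_1}(t/r_1)$. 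Each $\kappa_j(t)$ is a Möbius function of $\kappa_j$, so its power series in $t$ has coefficients that are polynomials in $\kappa_j$. Constancy of $\sum_j\kappa_j(t)$ in $x_1$ for all small $t$ then gives constancy of all power sums $\sum_j\kappa_j^m$, and hence of each $\kappa_j$ by Newton's identities. Alternatively, the derivatives in $t$ at $t=0$ are $\operatorname{tr}(A^m)$ plus lower-order terms.

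The same argument applies to $g$. Thus $h$ and $g$ are isoparametric, $H^h(a(s))$ and $H^g(b(s))$ are the constant mean curvatures of the parallel hypersurfaces, and \eqref{eq: differential equation a} holds, which completes the converse.
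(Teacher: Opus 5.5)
Your proposal is correct and follows essentially the same route as the paper. Taking the trace of \eqref{A-eigenspaces} gives the mean curvature formula for both directions, Theorem~\ref{classAcylinders} gives the local form in the converse, and since the parallel hypersurfaces $h_t$, $g_t$ have constant mean curvature for all $t$ in an interval, $h$ and $g$ are isoparametric. The only differences are that you sketch a self-contained proof of Cartan's criterion (via the $t$-expansion of the principal curvatures of the parallel hypersurfaces and Newton's identities), where the paper just cites it, and that you state explicitly that $\sin\theta\neq 0\neq\cos\theta$ is what lets the two mean curvatures be separated.
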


\begin{proof}
 It follows from \eqref{eq1 - derivada da f} and \eqref{A-eigenspaces} that 
\begin{eqnarray*}n\mathcal{ H}^f(s, x_1, x_2)&=&
\theta'(s) - (k-1)\sin \theta (s)\, \mathcal{ H}^{h_a}(x_1)  
  + (n-k)\cos \theta(s)\, \mathcal{ H}^{g_b}(x_2)\\
  &=&
\theta'(s) - (k-1)\sin \theta(s) H^h(a(s))+(n-k)\cos \theta(s) H^g(b(s))
\end{eqnarray*}
for all $(s, x_1, x_2)\in M^n$.
Thus $f$ has constant mean curvature  $\mathcal{H}$ by  \eqref{eq: differential equation a}. The other assumption on $\theta$ and \eqref{R at dt} imply that $f$ does not split in any open subset.

Conversely, let $f \colon M^{n} \to \mathbb{Q}_{c_1}^{k} \times \mathbb{Q}_{c_2}^{n-k+1}$, $2 \leq  k \leq n -1$, be a  hypersurface with constant mean curvature $\mathcal{H}$ that is in class $\mathcal{A}$ and does not split in any open subset. By Theorem \ref{classAcylinders}, $f$ is locally given by \eqref{definition of f-1}, and by  \eqref{eq1 - derivada da f} and \eqref{A-eigenspaces} we have 
\begin{equation}\label{eq: mean curvature of h and g}
\begin{split}
  n\mathcal{ H}= \theta'(s) -(k-1) \sin \theta (s)\, \mathcal{ H}^{h_a}(x_1)  
  + (n-k) \cos \theta(s)\, \mathcal{ H}^{g_b}(x_2)
\end{split}
\end{equation}
for any $x=(s,x_1,x_2) \in M^n = I \times M_1^k \times M_2^{n-k}$. 
Thus, for each $s\in I$,  the hypersurfaces
$$
h_{a(s)} \colon M_1^{k-1} \to \mathbb{Q}_{c_1}^{k} \quad \text{and} \quad
g_{b(s)} \colon M_2^{n-k} \to \mathbb{Q}_{c_2}^{n-k+1}
$$
have constant mean curvature. 
Since $a'(s)$ and $b'(s)$ are positive functions and  $a(0)=b(0)=0$, there exists  
$
\delta >0$ such that $(-\delta, \delta )\subset a(I)\cap b(I)
$, hence for any 
$
s\in (-\delta , \delta )
$ 
one can choose $s_1, s_2 \in I$ with  $a(s_1)=s=b(s_2)$. It follows that $h_s$ and $g_s$ have constant mean curvature for all $s\in (-\delta, \delta)$, hence both $h$ and $g$ are isoparametric by a well-known theorem by Cartan.  For any $s \in (-\delta, \delta ),$  denote by $H^h(s)$ and $H^g(s)$ the mean curvature of $h_s$ and $g_s,$ respectively. 
Then  \eqref{eq: mean curvature of h and g} becomes \eqref{eq: differential equation a}, and the proof is completed.
\end{proof}

\section{Hypersurfaces of  $\mathbb{Q}_{c_1}^{k} \times \mathbb{Q}_{c_2}^{n-k+1} $ in class  $\mathcal{A}$ with constant product angle function}

Given a hypersurface $f\colon M^n\to \mathbb{Q}_{c_1}^{k} \times \mathbb{Q}_{c_2}^{n-k+1} $, its \emph{product angle function} is defined by $\Theta=\langle PN, N\rangle$, where $N$ is a unit normal vector field along $f$ and $P$ is the product structure of $\mathbb{Q}_{c_1}^{k} \times \mathbb{Q}_{c_2}^{n-k+1}$, given by $P(X_1, X_2)=(X_1, -X_2)$
for tangent vector fields $X_1$ and $X_2$. 
Thus
$$\Theta=\langle \pi_1(N)-\pi_2(N), N\rangle=1-2t=2r-1.$$

Therefore, $\Theta$ is constant if and only if the same holds for $r$. 
In particular, by Proposition \ref{product locally splits}, $f$ splits locally if and only if $\Theta$ has a constant value $\Theta_0=\pm 1$.
Moreover, it follows from \eqref{dert2} that, under the assumption of constancy of $\Theta$ (or, equivalently, of $r$ or $t$), the vector field $\xi$ is a principal direction of $f$ with $0$ as the associated principal curvature. However, unlike the case where the ambient space is $\mathbb{S}^n\times \mathbb{R}$ or $\mathbb{H}^n\times \mathbb{R}$, in which case hypersurfaces with constant angle are automatically in class $\mathcal{A}$, this may not be the case if the ambient space is a product $\mathbb{Q}_{c_1}^{k} \times \mathbb{Q}_{c_2}^{n-k+1} $ with $2\leq k\leq n-1$, as shown, for instance, by some minimal or constant curvature $3$-dimensional hypersurfaces of $\mathbb{S}^2\times \mathbb{S}^2$ (see Theorem $1.1$ of \cite{LVWY}). 

For a hypersurface given by \eqref{definition of f-1}, since $r=\sin^2\theta$ by \eqref{RR}, the product angle function $\Theta$ is constant if and only if the function $\theta(s)$ is constant. Thus, Theorem \ref{classAcylinders} has the following immediate consequence. 

  \begin{corollary}\label{cor: PAC} Let $h_s\colon M_1^{k-1}\to \mathbb{Q}_{c_1}^{k}$ and  $g_s \colon  M_2^{n-k} \to \mathbb{Q}_{c_2}^{n-k+1}$, $2 \leq  k \leq n -1$, be parallel families of hypersurfaces of $\mathbb{Q}_{c_1}^{k}$ and  $\mathbb{Q}_{c_2}^{n-k+1}$, respectively.  
Then, for any $\theta\in (0, 2\pi)\setminus \{\pi/2, \pi, 3\pi/2\}$, the restriction of the map  
$$
 f \colon M^n=\mathbb{R} \times M_1^{k-1} \times M_2^{n-k} \to \mathbb{Q}_{c_1}^{k} \times \mathbb{Q}_{c_2}^{n-k+1} \subset \mathbb{R}^{N_1}_{\sigma(c_1)}\times \mathbb{R}^{N_2}_{\sigma(c_2)}=\mathbb{R}^{N}_{\mu},$$
 defined by  
\begin{equation}\label{definition of f}
f(s,x_1,x_2) = h_{s\, \cos \theta}(x_1) +  g_{s\,\sin \theta}(x_2)
\end{equation}
for all $(s,x_1,x_2) \in M^n$, to the open subset of its regular points, defines a hypersurface in class $\mathcal{A}$ that has constant product angle function $\Theta=-\cos (2\theta)$ (which therefore does not split in any open subset).

  Conversely, any hypersurface $f\colon M^n\to \mathbb{Q}_{c_1}^{k} \times \mathbb{Q}_{c_2}^{n-k+1} $, $2 \leq  k \leq n -1$, in class $\mathcal{A}$ that has constant product angle function
  and does not split in any open subset is given locally in this way. 
  \end{corollary}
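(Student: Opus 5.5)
The plan is to read the statement off Theorem~\ref{classAcylinders} in the special case $\theta$ constant, so that $a(s)=s\cos\theta$ and $b(s)=s\sin\theta$.

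For the direct statement, fix $\theta\in(0,2\pi)\setminus\{\pi/2,\pi,3\pi/2\}$ and set $a(s)=s\cos\theta$, $b(s)=s\sin\theta$. Then $a(0)=b(0)=0$ and $(a')^2+(b')^2=1$. The positivity requirement $a',b'>0$ in \eqref{ab} only holds when $\theta\in(0,\pi/2)$. For the other admissible values I would reduce to this case by changing unit normals. Replacing $N^h$ by $-N^h$ replaces $h_s$ by $h_{-s}$, and likewise for $g$. This turns $\cos\theta$ and $\sin\theta$ into their absolute values, so the computations leading to \eqref{A-eigenspaces}, \eqref{RR} and \eqref{R at dt} apply verbatim. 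Note also that those computations never used the sign of $a'$ or $b'$; only $a'=\cos\theta$ and $b'=\sin\theta$ enter.

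Hence $f$ is in class $\mathcal{A}$ on its regular set, with $\theta'=0$ and $r=\sin^2\theta$. The product angle function is therefore $\Theta=2r-1=2\sin^2\theta-1=-\cos 2\theta$, which is constant. Since $\sin 2\theta\neq 0$, \eqref{R at dt} gives $\xi\neq 0$ everywhere. Equivalently, $\Theta\neq\pm1$, so by Proposition~\ref{product locally splits} $f$ does not split on any open subset. Finally, because $s$ now ranges over all of $\mathbb{R}$, one should restrict to the regular points, exactly as the statement does.

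For the converse, let $f$ be in class $\mathcal{A}$ with constant $\Theta$, not splitting on any open subset. By Theorem~\ref{classAcylinders}, on an open dense subset $f$ is locally of the form \eqref{definition of f-1}, with $a$ and $b$ built in its proof from $\sqrt{r}$ and $\sqrt{1-r}$ along $\gamma$. Since $\Theta=2r-1$ is constant, $r$ is constant. Because $f$ does not split, $r\in(0,1)$. Thus $a'$ and $b'$ are positive constants, i.e.\ $\theta$ is constant in $(0,\pi/2)$, and $a(s)=s\cos\theta$, $b(s)=s\sin\theta$. This gives exactly \eqref{definition of f}. Moreover, $r$ is constant on all of $M$, so the ``open dense'' qualifier is harmless: $\xi$ never vanishes, and the construction in the proof of Theorem~\ref{classAcylinders} applies around every point.

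The only delicate point is bookkeeping. In the converse, the proof of Theorem~\ref{classAcylinders} defines $a$ from $\sqrt{r}$ and $b$ from $\sqrt{1-r}$, while \eqref{RR} gives $r=\sin^2\theta$. These two descriptions must be reconciled, possibly by swapping roles or relabeling $\theta\mapsto\pi/2-\theta$. I would check this against \eqref{RR} and fix the convention so that the formula $\Theta=-\cos2\theta$ comes out consistently. Beyond that, the argument is immediate from Theorem~\ref{classAcylinders}.
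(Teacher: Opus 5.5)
Your proposal is correct and follows the paper's own argument: the corollary is read off Theorem~\ref{classAcylinders} by noting that $r=\sin^2\theta$, so $\Theta=2r-1=-\cos 2\theta$ is constant exactly when $\theta$ is. The bookkeeping point you flag resolves in favor of \eqref{RR}, with $\cos\theta=\sqrt{1-r}$ and $\sin\theta=\sqrt{r}$: the paper's computation $\|\alpha'\|=r^{1/2}$ shows that the curve in the second factor has speed $\sqrt{r}$ and the one in the first has speed $\sqrt{1-r}$, so the formulas for $a$ and $b$ in the proof of the theorem simply have $\sqrt{r}$ and $\sqrt{1-r}$ interchanged.
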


\subsection{CMC hypersurfaces
in class  $\mathcal{A}$ with constant product angle function}

In this last subsection, we classify hypersurfaces of  $\mathbb{Q}_{c_1}^{k} \times \mathbb{Q}_{c_2}^{n-k+1} $, $2 \leq  k \leq n -1$, in class  $\mathcal{A}$ that have constant mean curvature and constant product angle function.

First notice that, if $f\colon M^n\to \mathbb{Q}_{c_1}^{k} \times \mathbb{Q}_{c_2}^{n-k+1}$  splits, for instance, if  $M^n$ is isometric to the product $\mathbb{Q}^k_{c_1} \times M_2^{\,n-k}$ and  
$f = i_1 \times f_2$, where  
$i_1 \colon \mathbb{Q}^k_{c_1} \to \mathbb{Q}^k_{c_1}$ is the identity and  
$f_2 \colon M_2^{\,n-k} \to \mathbb{Q}_{c_2}^{\,n-k+1}$ is a hypersurface,  then $f$ has constant product angle function $\Theta=-1$, and $f$ has constant mean curvature if and only if the same holds for $f_2$.  Similarly, if $M^n$ is isometric to  
$M_1^{\,k-1} \times \mathbb{Q}_{c_2}^{\,n-k+1}$ and $f = f_1 \times i_2$, where $i_2 \colon \mathbb{Q}^{n-k+1}_{c_2} \to \mathbb{Q}^{n-k+1}_{c_2}$ is the identity and  
$f_1 \colon M_1^{\,k-1} \to \mathbb{Q}_{c_1}^{\,k}$ is a hypersurface.

From now on, we consider hypersurfaces $f\colon M^n\to \mathbb{Q}_{c_1}^{k} \times \mathbb{Q}_{c_2}^{n-k+1}$, $2 \leq  k \leq n -1$, that do not split in any open subset.  Without loss of generality, we may assume that $c_1, c_2\in \{-1, 0, 1\}$.    
So, let 
$
f \colon M^{n} \to \mathbb{Q}_{\epsilon_1}^{k} \times \mathbb{Q}_{\epsilon_2}^{n-k+1}, \quad 2\leq k \leq n-1
$, $\epsilon_1, \epsilon_2\in \{-1, 0, 1\}$,
be a hypersurface in class  $\mathcal{A}$ with constant mean curvature and constant product angle function that does not split in any open subset.
By Theorem \ref{thm:cmc} and Corollary~\ref{cor: PAC}, there exist isoparametric hypersurfaces 
$
h \colon M_1^{k-1} \to \mathbb{Q}_{\epsilon_1}^{k}
$ and $
g \colon M_2^{n-k} \to \mathbb{Q}_{\epsilon_2}^{n-k+1}$,
and 
$
\theta \in (0, 2\pi)\setminus \{\pi/2, \pi, 3\pi/2\}
$,
such that $f$ is locally given by \eqref{definition of f}. Eq.~\eqref{eq: differential equation a} reduces to
\begin{equation}\label{eq: constant theta}
- (k-1)\sin\theta\, \mathcal{H}^h\!\left( s\cos \theta  \right) 
+ (n-k)\cos \theta\, \mathcal{H}^g\!\left( s\sin \theta  \right) 
= n\mathcal{H}, 
\quad  s \in (-\delta, \delta),
\end{equation}
where $\mathcal{H}^h(s)$ and $\mathcal{H}^g(s)$ denote the mean curvatures of $h_s$ and $g_s$, respectively.

It is well known that an isoparametric hypersurface of either Euclidean or hyperbolic space can have at most two distinct principal curvatures. For an  isoparametric hypersurface $h\colon M^n\to \mathbb{H}^{n+1}$ with a single principal curvature $\lambda \geq 0$,
\begin{equation}\label{hip1}
\mathcal{H}^h(s)=\begin{cases}
1, & \text{if } \lambda =1,\\
\tanh ( \varphi -s), & \text{if } \lambda=\tanh \varphi,\\
\coth ( \varphi -s), & \text{if } \lambda = \coth \varphi,
\end{cases}
\end{equation}
whereas
\begin{equation}\label{hip2}n\mathcal{H}^h(s)= m  \tanh ( \varphi -s) + (n-m) \coth ( \varphi -s)
\end{equation}
if $h$ has two distinct principal curvatures $\lambda_1=\tan (\varphi)$ and $\lambda_2=\coth \varphi$, with multiplicities $m$ and $n-m$, respectively. 
If $h\colon M^n\to \mathbb{R}^{n+1}$ is an  isoparametric hypersurface, then either $h$ is totally geodesic or 
\begin{equation}\label{euc}
n\mathcal{H}^h(s)=\frac{k\lambda}{1-s\lambda},
\end{equation}
where $\lambda$ is the unique nonzero principal curvature of $h$, with multiplicity $k$. 
Finally, for an isoparametric hypersurface $h\colon M^n \to \mathbb{S}^{n+1}$, with  $\ell \in \{1,2,3,4,6\}$ distinct principal curvatures
$\lambda_i=\cot \varphi_i$, $1\leq i\leq \ell$,
whose multiplicities are $m_1, \ldots, m_\ell$, respectively,
\begin{equation}\label{sph}\
n\mathcal{H}^h(s)
=\sum_{i=1}^{\ell} m_i\,\cot\bigl(\varphi_i - s\bigr).
\end{equation}

We now give examples of hypersurfaces of  $\mathbb{Q}_{\epsilon_1}^{k} \times \mathbb{Q}_{\epsilon_2}^{n-k+1} $ in class  $\mathcal{A}$ that have constant mean curvature, constant product angle function, and do not split in any open subset.

\begin{example}\label{ex: flat-horospherical}
Let $h \colon \mathbb{R}^{k-1} \to \mathbb{H}^{k}$ be a horosphere, oriented so that its unique principal curvature is equal to $1$, 
let $g \colon \mathbb{R}^{\,n-k} \to \mathbb{R}^{\,n-k+1}$ be a hyperplane, 
and let $
\theta \in (0, 2\pi) \setminus \left\{ \tfrac{\pi}{2}, \pi, \tfrac{3\pi}{2} \right\}
$. Then the map 
$
f \colon \mathbb{R} \times \mathbb{R}^{k-1} \times \mathbb{R}^{\,n-k} 
\longrightarrow \mathbb{H}^{k} \times \mathbb{R}^{\,n-k+1}
$,
given by \eqref{definition of f}, 
defines a hypersurface in class~$\mathcal{A}$ with constant product angle function 
$
\Theta=-\cos 2\theta\in (-1,1)
$ 
and constant mean curvature 
$
\mathcal{H} = - ((k-1)\,\sin \theta )/n.
$
\end{example}

\begin{example} \label{example1}
Let $
h,g \colon M^{k-1} \to \mathbb{Q}_{\epsilon}^{k}$, $ k \ge 2,$  $\epsilon \in \{-1,1\}$, 
be congruent isoparametric hypersurfaces, that is, $\Phi \circ h = g$ for some isometry $\Phi \colon \mathbb{Q}_{\epsilon}^{k} \to \mathbb{Q}_{\epsilon}^{k}$. Let \(N^h\) and \(N^g\) be unit normal vector fields to $h$ and $g$, respectively, and let
 $\theta\in \{\tfrac{\pi}{4},\tfrac{3\pi}{4}\}$ if $\Phi_* N^h = N^g$,  and $\theta\in\{\tfrac{5\pi}{4},\tfrac{9\pi}{4}\}$ if $\Phi_* N^h = -N^g$.
Let $\delta > 0$ be such that
$h_{s \cos \theta}$ and $
g_{s \sin \theta} $
are immersions for all $s \in (-\delta, \delta)$.
Then the map
$
f \colon (-\delta, \delta) \times M^{k-1} \times M^{k-1} \to \mathbb{Q}_\epsilon^{k} \times \mathbb{Q}_\epsilon^{k},
$
defined by \eqref{definition of f},
is a minimal hypersurface in class~$\mathcal{A}$ with vanishing product angle function.
\end{example}

\begin{example}\label{example}  Let $h \colon \mathbb{H}^m\times \mathbb{S}^m \to \mathbb{H}^{2m+1}$ be an isoparametric hypersurface with two distinct principal curvatures $\tanh \varphi$  and $\coth \varphi$,  $\varphi > 0$, both with multiplicity $m\geq 1$, and let $g \colon \mathbb{S}^{4m} \to \mathbb{H}^{4m+1}$ be a geodesic hypersphere with principal curvature $\coth (2\varphi)$. For $\theta = \arctan (\pm 2)$,  let $\delta > 0$ be such that 
$h_{s \cos \theta}$ and $g_{s \sin \theta}$ are immersions for all $s \in (-\delta, \delta)$. Then 
\[
f \colon (-\delta, \delta ) \times \mathbb{H}^m\times \mathbb{S}^m \times \mathbb{S}^{4m} \to \mathbb{H}^{2m+1} \times \mathbb{H}^{4m+1},
\]
given by \eqref{definition of f} for all $s\in (-\delta, \delta)$, $x_1\in \mathbb{H}^m\times \mathbb{S}^m$ and $x_2\in \mathbb{S}^{4m}$,  is a minimal hypersurface in class~$\mathcal{A}$ with constant product angle function $\Theta=-\cos 2\theta=3/5$.
That $f$ is minimal follows from 
$$
- 2m\sin \theta\, \mathcal{H}^h(s \cos \theta ) + 4m\cos \theta\, \mathcal{H}^{g}(s \sin \theta) = 0,
$$ using that $\mathcal{H}^h(s) = ( \tanh(\varphi - s) +  \coth(\varphi - s)\,)/2$ and $\mathcal{H}^g(s) = \coth(2\varphi - s)$.
\end{example}


\begin{example}\label{example2} Let $f \colon (-\delta, \delta ) \times \mathbb{S}^{4m} \times \mathbb{R}^m\times \mathbb{H}^m  \to \mathbb{H}^{2m+1} \times \mathbb{H}^{4m+1}$  be given as in the preceding example, with $\theta = \arctan (\pm \tfrac{1}{2})$ and the roles of $h$ and $g$ reversed, the single principal curvature of $h$ being $\coth \varphi$, with $\varphi > 0$, and the two distinct principal curvatures of $g$ being $\tanh (\varphi/2)$ and $\coth (\varphi/2)$, both with multiplicity $m$.  Then $f$ is a minimal hypersurface in class~$\mathcal{A}$ with constant product angle function $\Theta=-3/5$.
\end{example}

\begin{example}\label{example3} Let $h \colon \mathbb{R}^{k-1} \to \mathbb{H}^{k}$ and $g \colon \mathbb{R}^{n-k} \to \mathbb{H}^{n-k+1}$ be horospheres. For  $\theta \in (0, 2\pi) \setminus \left\{ \tfrac{\pi}{2}, \pi, \tfrac{3\pi}{2} \right\} $, let 
$
f \colon \mathbb{R}\times  \mathbb{R}^{k-1} \times  \mathbb{R}^{n-k} 
\to \mathbb{H}^{k} \times \mathbb{H}^{n-k+1}
$
be given by \eqref{definition of f}. Then $f$ defines a hypersurface in class~$\mathcal{A}$ with constant product angle function $\Theta=-\cos (2\theta)$
and constant mean curvature 
$$
\mathcal{H} = \frac{-\lambda(k-1) \sin \theta +\mu(n-k) \cos \theta}{n},$$
where $\lambda, \mu\in \{-1, 1\}$ are the single  principal curvatures of $h$ and $g$, respectively.
\end{example}

Our last result states that the preceding examples exhaust all possible ones. The proof consists of 
 a tedious and straightforward case-by-case verification, which we omit, of all possible cases in which the function on the left-hand side of \eqref{eq: constant theta} is constant, with $\mathcal{H}^h(s)$ and $\mathcal{H}^g(s)$ given by one of the formulas \eqref{hip1} to~\eqref{sph}.

\begin{theorem}\label{thm:cmccfconst}
Let
$
f \colon M^{n} \to \mathbb{Q}_{\epsilon_1}^{k} \times \mathbb{Q}_{\epsilon_2}^{n-k+1}$, 
$\epsilon_1 \in \{ -1,1\}$, $ \epsilon_2 \in \{ -1,0,1\}$, $2 \leq k \leq n-1$,
be a hypersurface in class~$\mathcal{A}$ with constant mean curvature and constant product angle function that does not split in any open subset. 
Then one of the following possibilities holds:
\begin{itemize}
    \item[(i)] $\epsilon_1=-1$, $\epsilon_2=0$ and $f$ is locally as in Example \ref{ex: flat-horospherical};
\item[(ii)] $\epsilon_1=1=\epsilon_2$ and $f$ is locally as in Example \ref{example1};
\item[(iii)] $\epsilon_1=-1=\epsilon_2$ and $f$ is locally as in either of Examples \ref{example1} to \ref{example3}.
\end{itemize}
\end{theorem}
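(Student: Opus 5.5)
By Theorem~\ref{thm:cmc} and Corollary~\ref{cor: PAC}, $f$ is locally given by \eqref{definition of f}, with $h$, $g$ isoparametric and $\theta$ a constant in $(0,2\pi)\setminus\{\pi/2,\pi,3\pi/2\}$. Equation \eqref{eq: constant theta} must therefore hold identically in $s\in(-\delta,\delta)$. Since $\sin\theta\cos\theta\neq 0$, the plan is to write
\[
F(s)=-(k-1)\sin\theta\,\mathcal{H}^h(s\cos\theta)+(n-k)\cos\theta\,\mathcal{H}^g(s\sin\theta),
\]
substitute the explicit formulas \eqref{hip1}--\eqref{sph}, and determine when $F$ is constant. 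Both summands are real-analytic in $s$ and extend meromorphically to $\mathbb{C}$ (they are finite sums of $\cot$, $\tanh$, $\coth$, or rational functions). Constancy of $F$ is thus equivalent to the following: every pole of the first summand is cancelled by a pole of the second with opposite residue, and the nonsingular parts balance. I will use this pole/residue bookkeeping as the main tool, rather than expanding in Taylor series.

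First, fix $\epsilon_1=1$. Then $\mathcal{H}^h(s\cos\theta)$ is a sum $\sum m_i\cot(\varphi_i-s\cos\theta)$. Unless $h$ is totally geodesic, this has real poles spaced periodically with period $\pi/|\cos\theta|$.
\begin{itemize}
\item[$\bullet$] If $\epsilon_2=0$, the second term is rational with at most one pole, so it cannot cancel infinitely many poles.
\item[$\bullet$] If $\epsilon_2=-1$, the second term has at most one real pole.
\end{itemize}
So both cases force $h$ to be totally geodesic, and then $g$ too (a constant-term argument). This contradicts non-splitting? Not quite: totally geodesic factors would give $\mathcal{H}\equiv 0$. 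That situation is included in Example~\ref{example1} with congruent totally geodesic hypersurfaces; otherwise the splitting assumption should exclude it, and I will check which. For $\epsilon_2=1$, matching the pole lattices requires $|\cos\theta|=|\sin\theta|$ (or commensurable periods). Comparing residues $(k-1)m_i\sin\theta/\cos\theta$ against $(n-k)m'_j\cos\theta/\sin\theta$ and the finite sets of poles modulo the period should force $\ell=\ell'$, equal multiplicities and $\varphi$'s, and $\theta\in\{\pi/4,3\pi/4,5\pi/4,7\pi/4\}$. This means $h$ and $g$ are congruent, with the sign of $\tan\theta$ dictating the orientation, which is Example~\ref{example1}.

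Next, fix $\epsilon_1=-1$. The functions $\tanh(\varphi-u)$ and $\coth(\varphi-u)$ have poles on lines $\operatorname{Im}=\pi/2+\pi\mathbb{Z}$ and $\operatorname{Im}\in\pi\mathbb{Z}$, periodic with imaginary period $i\pi$; the horosphere case gives the constant $1$.
\begin{itemize}
\item[$\bullet$] If $\epsilon_2=0$, a nonconstant Euclidean term $k\lambda/(1-s\lambda)$ has a single pole, which cannot cancel a periodic family. So both $h$ and $g$ contribute constants, forcing $h$ a horosphere and $g$ a hyperplane (Example~\ref{ex: flat-horospherical}).
\item[$\bullet$] If $\epsilon_2=1$, the spherical poles are real and periodic, while the hyperbolic ones are not real-periodic. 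Hence $g$ would have to be totally geodesic, and then $h$ must be a horosphere. This should be excluded or absorbed, and I need to double-check it against the statement (which lists no $\epsilon_1=-1,\epsilon_2=1$ case).
\item[$\bullet$] If $\epsilon_2=-1$, matching imaginary periods $\pi/|\cos\theta|$ and $\pi/|\sin\theta|$ of the two families, together with the residue-ratio condition, gives only a few possibilities. Equal periods yields congruent $h,g$ (Example~\ref{example1}). Period ratio $2$ lets one $\coth$-family of $g$ at $2\varphi$ absorb both the $\tanh$ and $\coth$ poles of $h$ at $\varphi$, since $\tanh x+\coth x=2\coth 2x$; this forces $\tan\theta=\pm2$ and multiplicities $m,m,4m$, which are Examples~\ref{example} and~\ref{example2}. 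Both constant (horospheres) gives Example~\ref{example3}.
\end{itemize}

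The main obstacle is the exhaustive residue matching in the $\epsilon_1=\epsilon_2=1$ case with $\ell\in\{1,2,3,4,6\}$ and in the hyperbolic case with mixed families. I would handle both by reducing modulo the period, so that each $\cot$ or $\coth$ sum becomes a finite configuration of points on a circle with weights. I would then use the identity $\sum_{j=0}^{p-1}\cot(x+j\pi/p)=p\cot(px)$ to see that commensurable periods with ratio $p/q$ force equally spaced configurations. Finally, I would check that the weights and residues only allow the ratios listed above, where the Münzner multiplicity constraints keep the list finite.
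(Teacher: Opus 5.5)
There is a genuine gap: two of your claims are false, and the second cannot be repaired because case (ii) of the statement is itself incomplete. Your method (extending the left-hand side of \eqref{eq: constant theta} meromorphically and cancelling poles) is a reasonable way to carry out the case check that the paper omits. Your treatment of $\epsilon_1=-1$, $\epsilon_2\in\{0,-1\}$ is essentially right.

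First, a totally geodesic hypersurface of $\mathbb{S}^k$ does \emph{not} contribute a constant. It is the case $\ell=1$, $\varphi_1=\pi/2$ of \eqref{sph}, so $(k-1)\mathcal{H}^h(u)=(k-1)\tan u$; its parallels are small spheres. Hence a spherical summand always has infinitely many real poles, at $s\cos\theta\in\varphi_i+\pi\mathbb{Z}$. Their residues are nonzero and cannot cancel among themselves. A Euclidean or hyperbolic summand, by contrast, has at most one real pole, coming from $k\lambda/(1-s\lambda)$ or from $\coth$. So the cases $(\epsilon_1,\epsilon_2)=(1,0),(1,-1),(-1,1)$ are empty outright. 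The ``totally geodesic'' survivors you leave open do not exist. They could not belong to Example \ref{example1} anyway, since that example lives in $\mathbb{Q}^k_\epsilon\times\mathbb{Q}^k_\epsilon$.

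Second, and more seriously, when $\epsilon_1=\epsilon_2=1$ residue matching does not force $\ell=\ell'$, congruence and $|\tan\theta|=1$. By M\"unzner, $\varphi_i=\varphi_1+(i-1)\pi/\ell$. So the poles of the first summand form an arithmetic progression of spacing $\pi/(\ell|\cos\theta|)$ with residues $m_i\tan\theta$. Those of the second form one of spacing $\pi/(\ell'|\sin\theta|)$ with residues $-m'_j\cot\theta$. Cancellation therefore requires $|\tan\theta|=\ell/\ell'$ and $m'_j=m_i\ell^2/\ell'^2$ at matched poles, and this has non-congruent solutions.

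Concretely, let $h\colon\mathbb{S}^m(\cos\psi)\times\mathbb{S}^m(\sin\psi)\to\mathbb{S}^{2m+1}$, with principal curvatures $\cot\psi$ and $-\tan\psi=\cot(\psi+\pi/2)$, each of multiplicity $m$. Let $g\colon\mathbb{S}^{4m}\to\mathbb{S}^{4m+1}$ be umbilical with principal curvature $\cot 2\psi$, and let $\tan\theta=2$. Using $\cot x-\tan x=2\cot 2x$, the left-hand side of \eqref{eq: constant theta} becomes
\[
-2m\sin\theta\,\cot(2\psi-2s\cos\theta)+4m\cos\theta\,\cot(2\psi-s\sin\theta)\equiv 0,
\]
since $\sin\theta=2\cos\theta$ gives $2s\cos\theta=s\sin\theta$. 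This is a minimal, non-splitting class-$\mathcal{A}$ hypersurface of $\mathbb{S}^{2m+1}\times\mathbb{S}^{4m+1}$ with $\Theta=3/5$. It is the spherical twin of Example \ref{example}, and it is not as in Example \ref{example1}.

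The identity $\sum_{j=0}^{p-1}\cot(x+j\pi/p)=p\cot(px)$ that you quote yields many more such examples. For instance, pair an umbilical hypersurface of dimension $9m'$ with a Cartan hypersurface with $\ell'=3$ and multiplicity $m'$, taking $\tan\theta=1/3$. So the final verification in your plan (that the weights and residues allow only the ratios already listed) cannot succeed. Your pole--residue analysis, carried out correctly, produces a strictly larger list when $\epsilon_1=\epsilon_2=1$.
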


\begin{rem} 
We have recently become aware of the article \cite{LP}, in which the authors prove that isoparametric hypersurfaces of 
$\mathbb{Q}_{\epsilon_1}^{k} \times \mathbb{Q}_{\epsilon_2}^{n-k+1}$
have necessarily constant product angle function, an important step towards their classification. Moreover, they obtain such a classification under the additional assumption that the hypersurface has a \emph{distinguished point}, which they prove to imply that all points of the hypersurface are distinguished. The latter condition turns out to be equivalent to the hypersurface belonging to class~$\mathcal{A}$, so Theorem  $2$ in  \cite{LP} classifies isoparametric hypersurfaces  of $\mathbb{Q}_{\epsilon_1}^{k} \times \mathbb{Q}_{\epsilon_2}^{n-k+1}$ in class 
$\mathcal{A}$.
These correspond to the hypersurfaces in Examples \ref{ex: flat-horospherical}
and \ref{example3}, besides those that split as products of an isoparametric of one of the factors with the identity map in the other.
\end{rem}


\begin{thebibliography}{1}
\bibitem{CT} Carvalho, A. N. S. and  Tojeiro, R., 
\emph{Constant curvature hypersurfaces of cylinders over space forms},
Preprint (available at ArXiv arXiv:2510.17030v1  [math,DG]).

 


\bibitem{LP} de Lima, R. and Pipoli, G., \emph{Isoparametric hypersurfaces in products of simply connected space forms}, Preprint (available at arXiv 2511.12527v1 [math,DG]).

\bibitem{LVWY} Haizhong Li, H., Vrancken, L., Wang, X. and Yao, Z., \emph{Hypersurfaces of $\mathbb{S}^2\times \mathbb{S}^2$ with constant sectional curvature},  Calc. Var. Partial Diff. Eq. 63 (2024), no. 7, Paper No. 167, 33 pp.

\bibitem{LVWY2} Haizhong Li, H., Vrancken, L., Wang, X. and Yao, Z., \emph{Hypersurfaces of $\mathbb{H}^2\times \mathbb{H}^2$ with constant sectional curvature}. Preprint.

\bibitem{KNT} Kim, J., Nikolayevsky, Y. and Tojeiro, R., \emph{ Locally symmetric hypersurfaces in globally
symmetric spaces}. Preprint. 2025. 

\bibitem{LTV}
Lira, J.H., Tojeiro, R. and Vit\'orio, F.,  \emph{A Bonnet theorem for isometric immersions into products of space forms},
 Arch. Math. 95, 469–479 (2010). 
 
\bibitem{MT}
Mendon\c{c}a, B. and Tojeiro, R., \emph{Submanifolds of products of space forms}, Indiana Univ. Math. J. 62 (4) (2013), 1283-1314.

\bibitem{MT2}
Mendon\c{c}a, B. and Tojeiro, R., \emph{Umbilical submanifolds of $\mathbb{S}^n\times \mathbb{R}$},  Canadian J. Math.  66 (2104), 400--428. 




 \bibitem{To} Tojeiro, R., \emph{On a class of hypersurfaces in $\mathbb{S}^n\times \mathbb{R}$ and $\mathbb{H}^n\times \mathbb{R}$.\/}  Bulletin Braz. Math. Soc. 41 (2) (2010), 199-209.

 \bibitem{To2}  Tojeiro, R., \emph{A decomposition theorem for immersions of product manifolds\/}. Proc. Edinburgh Math. Soc. 59 (2016), 247--269.
 
 \bibitem{rs} H. Reckziegel and M. Schaaf,  \emph{De Rham decomposition of netted
manifolds},  Result. Math.  35 (1999), 175--191.

\bibitem{U} Urbano, F., \emph{On hypersurfaces of $\mathbb{S}^2\times \mathbb{S}^2$},  Comm. Anal. Geom. 27 (2019), no. 6, 1381–-1416. 






\end{thebibliography}
\end{document}